\numberwithin{equation}{section}
\theoremstyle{definition}
\newtheorem{rmk}{Remark}[section]
\theoremstyle{plain}
\newtheorem{thm}{Theorem}%[chapter]%[section]%!!the counter really doesn't follow the chapters!!
\newtheorem{Prop}[rmk]{Proposition}
\newtheorem{lem}[rmk]{Lemma}
\def\bC {\mathbb{C}}
\def\R {\mathbb{R}}
\def\T {\mathbb{T}}
\def\Z {\mathbb{Z}}
\def\cC {\mathcal{C}}
\def\e {{\varepsilon}}
\def\eps {{\varepsilon}}
\def\pa {{\partial}}
\def\na {{\nabla}}
\newcommand{\Div}{\operatorname{div}}
\newcommand{\rot}{\operatorname{rot}}
\newcommand{\ba}{\begin{aligned}}
\newcommand{\ea}{\end{aligned}}
\newcommand{\be}{\begin{equation}}
\newcommand{\ee}{\end{equation}}
\newcommand{\etah}{\eta_{\rm h}}
\newcommand{\curl} {\mbox{curl} \, }
\newcommand{\FL} {\mbox{F}_L}
\newcommand{\FLz} {\mbox{F}_{L,3}}
\newcommand{\FLh} {\mbox{F}_{L,{\rm h}}}
\newcommand{\xh} {x_{\rm h}}
\newcommand{\PP}{   \mathbb {P}}
\numberwithin{equation}{section}
\begin{document}%%%%%%%%%%%%%%%%%%%%%%%%%%%%%%%%
%%%%%%%%%%%%%%%%%%%%%%%%%%%%%%%%%%%%%%%%%%%%%%%%

	\title[Wellposedness of linearized Taylor equations]{Wellposedness of linearized Taylor equations \\ in magnetohydrodynamics}
	\author{Isabelle Gallagher, David G\'erard-Varet}
	\date{\today}
		
		\address[Isabelle Gallagher]{Universit\'e Paris Diderot, Sorbonne Paris Cit\'e, Institut de Math\'ematiques de Jussieu-Paris Rive Gauche, UMR 7586, F- 75205 Paris, France}
		\address[David G\'erard-Varet]{Universit\'e Paris Diderot, Sorbonne Paris Cit\'e, Institut de Math\'ematiques de Jussieu-Paris Rive Gauche, UMR 7586, F- 75205 Paris, France}
		
\begin{abstract}
This paper is a first step in the study of the so-called Taylor model, introduced by J.B. Taylor in \cite{Taylor}. This system of nonlinear PDE's is derived from the viscous incompressible MHD equations, through an asymptotics relevant to the Earth's magnetic field. We consider here a simple class of linearizations of the Taylor model, for which we show well-posedness.

 \end{abstract}
	
	\maketitle
	
	\section{Presentation of the model and main result}
	\subsection{Introduction}
	The concern of this paper is the so-called Taylor model, derived by J.B. Taylor in 1963. The general motivation behind this model is the understanding of the dynamo effect in the Earth.  By dynamo effect, we mean the generation of  magnetic energy by the flow of   liquid iron in the Earth's core. This dynamical process has been recognized since the first half of the twentieth century, and sustains the magnetic field of 
the Earth, despite Joule dissipation. We refer to \cite{Gilbert,Dormy_Soward} for an introduction to the subject. 

A standard model in dynamo theory is the so-called incompressible MHD system, which is obtained after  coupling and simplifying  the Navier-Stokes and Maxwell equations (see~\cite{Gilbert}, \cite{Arsenio}). The resulting system reads 
\begin{equation} \label{MHD}
\begin{aligned}
&  \rho (\pa_t u + u \cdot \na u) + \na p + \rho \Omega_0 e \times u  - \mu \Delta u =  \mu_0^{-1} \curl B \times B\\Â 
& \pa_t B = \curl (u \times B) + \eta \Delta B  \\
& \Div u = \Div B = 0 \, .  
\end{aligned}
\end{equation}
The first line corresponds to the Navier-Stokes equation, that describes the evolution of the fluid velocity $u$ and pressure $p$. The density $\rho$ and viscosity $\mu$ are constant.  The equation is written in a frame rotating with the Earth, which is responsible for the Coriolis forcing term $\rho \Omega_0 e \times u$, with $\Omega_0$ the angular speed of the Earth and $e = e_3$ is taken as the rotation axis. Finally, as one is describing a conducting fluid, one must take into account the Laplace force  $\mu_0^{-1} \curl B \times B$ exerted by the magnetic field $B$ on the fluid ions, with  $\mu_0$ the magnetic permeability constant. 

The second line is the so-called induction equation, that describes the evolution of the magnetic field. 
It can be written $\pa_t B = \curl E$, where the electric field $E  = u \times B - \eta \rot B$ is deduced from   Ohm's law in a moving medium (see \cite{Gilbert} for details). 
Finally, the divergence free constraints on $u$ and $B$ correspond to the incompressibility of the fluid and the absence of magnetic monopole respectively.

With regards to the dynamo problem, the MHD system has been the matter of many works, see  \cite{Roberts, Arnold, Friedlander1,GV,GVRousset} among many. Most of them focus on linear studies : namely, by linearizing  \eqref{MHD}  around $(u = u(x), B = 0)$, one is left with 
\begin{equation} \label{kinematic}
 \pa_t b = \curl(u \times b) + \eta \Delta b 
\end{equation}
where $u$ is given. In other words, the retroaction of the magnetic field on the fluid is neglected, and one tries to determine which fluid flows allow for the growth of  magnetic perturbations~$b$. This amounts to establishing the existence of unstable spectrum for the operator at the right-hand side of \eqref{kinematic}. 
However, this spectral problem turns out to be difficult. Roughly, to be a dynamo field, $u$ must exhibit some kind of complexity. For instance, if $u$ has too many symmetries, there is only stable spectrum : this is the point of several antidynamo theorems, see \cite{Cowling,Arnold}. Also, if one looks for fast dynamos, meaning with a lower bound on the growth rate independent of the magnetic diffusivity $\eta$, then the field $u$ must exhibit some lagrangian chaos: we refer to \cite{Vishik,Childress}  for more on fast dynamos. 

Hence, a good understading of the Earth's magnetic field through explicit analytical calculations seems out of reach.  Unfortunately, numerical simulation of the MHD system is  also a very challenging problem, due to the presence of many small parameters : in a dimensionless form, \eqref{MHD} becomes  \begin{equation} \label{MHD2} 
\begin{aligned} 
& \pa_t u + u \cdot \na u  + \frac{\na p}{\eps} + \frac{e \times u}{\eps} - \frac{E}{\eps} \Delta u = \frac{\Lambda}{\eps \theta} \curl B \times B \\
& \pa_t B = \curl (u \times B) + \frac{1}{\theta} \Delta B \\
& \Div u  = \Div B = 0 \, . 
\end{aligned}
\end{equation}
The dimensionless parameters $\eps, E, \Lambda$ and  $\theta$ are the Rossby, Ekman, Elsasser and magnetic Reynolds numbers respectively. Typical values for the Earth's core are  
 $$ \eps \sim 10^{-7}\,, \quad \Lambda = O(1)\,, \quad \eps \theta \sim 10^{-4} \, , \quad E \sim 10^{-15} \, . $$
We refer to \cite{DDG} for more.   

Due to these small values and underlying small-scale phenomena, a direct computation of the solution is not possible. It is thus tempting to simplify \eqref{MHD2}, notably neglecting the inertia and viscous dissipation of the fluid. Proceeding formally, we obtain 
\begin{equation} \label{Taylor_parameters}  
\begin{aligned} 
 & e \times u + \na p = \frac{\Lambda}{\theta} \curl B \times B  \\
 & \pa_t B = \curl (u \times B) + \frac{1}{\theta} \Delta B \\
 & \Div u = \Div B = 0 \, .
\end{aligned}
\end{equation}  
This system was introduced formally and briefly discussed by  J.B. Taylor in  \cite{Taylor}. Its mathematical analysis is the subject of the present paper. Let us stress that other "degeneracies" of the MHD system have been recently considered, in link  to the magnetic relaxation problem introduced by  K. Moffatt. We refer  to \cite{MRR}, \cite{Brenier}. 

\subsection{The Taylor model}
Our long term goal is to understand better and possibly justify the asymptotics that leads from \eqref{MHD2} to \eqref{Taylor_parameters}. 
 In geophysical contexts, a huge litterature has been devoted to evolution equations with a linear skew-symmetric penalization \cite{CDGG, Feireisl}. The peculiarity of the present problem is its genuine nonlinear character, originating in  the penalization of the nonlinear Laplace force $\FL := \curl B \times B$ in \eqref{MHD2}. We shall only discuss here the limit Taylor system. For simplicity, we normalize all constants to $1$ and thus consider
 \begin{equation} \label{Taylor}  
\begin{aligned} 
 & e \times u + \na p =  \curl B \times B  \\
 & \pa_t B = \curl (u \times B) + \Delta B \\
 & \Div u = \Div B = 0 \, .
\end{aligned}
\end{equation}  
 We assume that this system is set in  a domain $\Omega$  with an impermeable boundary: $u\cdot n \vert_{\pa \Omega} = 0$ (we do not discuss the boundary conditions on $B$ for the time being).

 Let us start with general comments on the first equation (\ref{Taylor}a).  
 Time is only a parameter there, so that we omit it temporarily from notations.  This equation involves naturally the Coriolis operator   $ \cC u := \PP(e \times u)$, where $\:   \PP:=\mbox{Id} - \nabla \Delta^{-1} \mbox{div}$ 
 is the Leray projector onto divergence free vector fields. It defines a skew-symmetric operator over 
 the space $L^2_\sigma(\Omega)$  of $L^2$ divergence free fields tangent at $\pa \Omega$. The balance equation (\ref{Taylor}a)  implies the {\it Taylor constraint}:
 \begin{equation} \label{Taylor_constraint}
 \PP \FL \in  \mbox{Range}(\cC) \, .
 \end{equation}
 Under this constraint, for a given $B$ such that $\PP \FL$ belongs to $L^2_\sigma(\Omega)$,  any solution $u$ of  (\ref{Taylor}a) can be written 
 $u = u_m + u_g$, where 
 \begin{itemize}
 \item 
 $u_m$ belongs to the orthogonal of $\ker \cC$, and is uniquely determined by $\PP \FL$. In particular, it can be expressed in terms of $B$, possibly in an implicit way. Such a field $u_m$ will be called {\it magnetostrophic}. 
\item 
$u_g$ is any field in $\ker \cC$. It satisfies
\begin{equation} \label{geostrophicbalance}
e \times u + \na p = 0 \, , \quad \Div u = 0 \, , \quad u\cdot n\vert_{\pa \Omega} = 0 \, . 
\end{equation}
 Such a field $u_g$ will be called {\it geostrophic}. 
 \end{itemize}
 Note that, by skew-symmetry of the Coriolis operator and by the Taylor constraint \eqref{Taylor_constraint},~$\PP \FL$ must be orthogonal to $\ker \cC$ :
 \begin{equation} \label{Taylor_constraint2}
 \int_\Omega \PP \FL \cdot u_g  = \int_{\Omega} \FL \cdot u_g = 0 \, , \quad \mbox{ for all geostrophic fields} \: u_g \, .
 \end{equation}
 Inserting the decomposition $u = u_m + u_g$ into the induction equation (\ref{Taylor}b), we get
 \begin{equation} \label{inductionmg}
  \pa_t B = \curl(u_m \times B) + \curl(u_g \times B) + \Delta B \, .
  \end{equation}
   As $u_m = u_m(B)$, the first  term at the right hand-side can be seen a nonlinear functional of~$B$. The other term is more delicate, as the geostrophic field $u_g$  is {\it a priori} not determined. The idea is that the term $\curl(u_g \times B)$ should be a kind of Lagrange multiplier associated with the Taylor constraint \eqref{Taylor_constraint}. From this point of view, a parallel can be drawn with the incompressible Navier-Stokes: the term $\curl(u_g \times B)$ would correspond to the pressure gradient, whereas the Taylor constraint would correspond to the incompressibility condition. 
   
Let us for instance consider the case of the ball  $\Omega = B(0,1)$, discussed in \cite{Taylor}. The geostrophic fields $u_g$  have a simple characterization. It can be shown that   they are exactly those of the form $ u_g = (0,U_\theta(r), 0)$  in cylindrical coordinates $(r,\theta,z)$. Hence, condition \eqref{Taylor_constraint2} amounts to
\begin{equation} \label{constraint_sphere}
 \int (\FL)_\theta \, dz d\theta = 0 
 \end{equation}
along any cylinder $r = r_0$ in cylindrical coordinates.  Moreover, if $\FL$ is regular enough, \eqref{constraint_sphere} is equivalent to the original Taylor constraint \eqref{Taylor_constraint}.  See \cite{Taylor} for more details. 

Again, the term $\curl(u_g \times B)$ should allow to preserve \eqref{constraint_sphere} through time. We can write 
\begin{align*}
 0 & = \pa_t   \int (\FL)_\theta dz d\theta = \pa_t   \int (\curl B \times B)_{\theta}   dz d\theta \\
& = \int \left( \curl \pa_t B \times B + \curl B \times \pa_t B \right)_{\theta} dz d\theta 
\end{align*}
 and substitute for $\pa_t B$ using equation \eqref{inductionmg}. This formal manipulation is performed by Taylor in \cite{Taylor}. He finds an elliptic equation of second order on $U_\theta$, whose coefficients and source depend on $B$ (the source is explicit in terms of $B$ and $u_m$).  Such an equation can be seen as an analogue of the Poisson equation on pressure which is derived from the incompressible  Navier-Stokes system.  Following \cite{Taylor}, one may hope to invert this elliptic equation and express  in this way $u_g$ in terms of $B$. Eventually the evolution equation \eqref{inductionmg}  may make sense !

Of course, above computations and remarks lack mathematical justification. The present paper aims at taking a little step forward. 
  
 \subsection{The Taylor model in the torus}    \label{paragraph_Taylor_torus}
To avoid technicalities due to boundaries, we consider the Taylor system \eqref{Taylor} in the torus $\T^3$. The space $L^2_\sigma(\Omega)$ is now replaced by the space~$L^2_\sigma(\T^3)$  of $L^2$ divergence-free fields with zero mean over $\T^3$.  The Taylor constraint can  be made explicit in this setting. We assume that $\FL$ is smooth enough, and take the curl of equation (\ref{Taylor}a).  We find
\begin{equation}\label{taylor}
\partial_3 u = \curl  \, \FL\, ,  
\end{equation}
which is solvable if and only if: 
\begin{equation} \label{constraint_torus}
\int_{\T} \curl \, \FL (\cdot,x_3) \,  dx_3 = 0\, .
\end{equation}
 Under this condition, $\curl \FL$ has a unique antiderivative with zero mean in $x_3$, that is 
\begin{equation} \label{formula_um}
 u_m(\cdot,x_3) = \int_0^{x_3}\curl \,\FL (\cdot,y_3) \,  dy_3 -  \int_{\T}\int_0^{x_3}\curl \,\FL (\cdot,y_3) \,  dy_3 \, .
\end{equation}
 This field $u_m$ is a solution of \eqref{taylor}, but also of equation (\ref{Taylor}a). Indeed, it follows from~\eqref{taylor} that $\pa_3 \text{div} \, u_m = 0$, and as $u_m$ has zero mean in $x_3$, $\text{div} \, u_m = 0$. Hence,  the relation $ \pa_z u_m = \curl \FL$ can be written $\curl (e \times u_m) = \curl \FL$, which is the same as (\ref{Taylor}a). More generally, any solution  of (\ref{Taylor}a) is of the form $u = u_m + u_g$, where $u_g$ is any element in the kernel of the Coriolis operator.  It is well-known that these elements are the  $u_g = u_g(t,\xh)$, with~$\xh:=(x_1,x_2)$,  satisfying  
\begin{equation}\label{geostrophic}
\Div_{\rm h} u_{g,{\rm h}}  :=\partial_1 u_{g,1} + \partial_2 u_{g,2} = 0   \quad \mbox{and} \quad  \int_{\T^2}u_g(\cdot,\xh)\, d\xh = 0 \, .
\end{equation}
Clearly, $u_m$ is orthogonal to any field $u_g$ of the previous kind in $L^2_\sigma(\T^3)$. Hence, according to the terminology of the previous paragraph, we have identified the Taylor constraint \eqref{constraint_torus}, the magnetostrophic field $u_m$, and the geostrophic fields $u_g$.   A straightforward computation shows that \eqref{constraint_torus} is equivalent to  
\begin{equation} \label{variation_taylor} 
\int_\T \FLz(\cdot,x_3) dx_3 = 0 \, , \quad \mathbb{P}_{\rm h}    \int_\T \FLh(\cdot,x_3) dx_3 = 0 
\end{equation} 
where $\mathbb{P}_{\rm h} $ is the two-dimensional  Leray projector over $L^2_\sigma(\T^2)$. By the identity $\curl B \times B = B \cdot \na B + \frac{1}{2} \na |B|^2$,  it can also be written
\begin{equation} \label{variation_taylor_2} 
\int_{\T} B \cdot \na_{\rm h}  B_3  \, dx_3 = 0 \,, \quad  \mathbb{P}_{\rm h}   \int_{\T} B \cdot \na_{\rm h}  B_{\rm h}  \,  dx_3 = 0 \, .  
\end{equation} 
It is equivalent  to  \eqref{Taylor_constraint2} as well.

 One still needs to make sense of  \eqref{inductionmg}, notably of the Lagrange multiplier $\curl(u_g \times B)$. We follow the approach initiated by Taylor in the case of the ball. Assuming that $B,u_m, u_g$ are smooth enough and satisfy  \eqref{inductionmg}, we derive an evolution equation for $B \cdot \na B$. We write 
\begin{align*}
\pa_t (B \cdot \na B) & = B \cdot \na \pa_t B + \pa_t B \cdot \na B \\
 & = B \cdot \na (\curl(u_g \times B)) +  (\curl(u_g \times B) ) \cdot \na B + F
\end{align*} 
where 
$$ F = B \cdot \na (\curl(u_m \times B) + \Delta B) +  (\curl(u_m \times B) + \Delta B) \cdot \na B $$
 is a nonlinear functional of $u_m$ and $B$, that can be seen by \eqref{formula_um} as a nonlinear functional of $B$ alone.   Using the relation 
 \begin{equation} \label{curl_identity}
 \curl (a \times b) = b \cdot \na a - a \cdot \na b \,,
 \end{equation}
  valid for  all divergence-free vector fields $a,b$, and after a few simplifications, we obtain 
\begin{equation}
 \pa_t (B \cdot \na B) + u_g \cdot \na (B \cdot \na B) = (B \cdot \na)^2 u_g + F \,.
\end{equation}
The role of $u_g$ is to preserve the Taylor constraint through time. In the form  \eqref{variation_taylor_2}, this amounts to the system of equations 
$$ \left( \begin{smallmatrix} \mathbb{P}_{\rm h} &  \\ & 1 \end{smallmatrix}  \right) \left( - \int_\T (B \cdot \na)^2 dx_3 \, u_g \: + \:  u_g \cdot \int_\T \na (B \cdot \na B) dx_3 \right) =   \left( \begin{smallmatrix} \mathbb{P}_{\rm h} & \\ & 1 \end{smallmatrix}  \right)  \int_\T F dx_3  $$ 
which splits into 
\begin{equation} \label{Stokes_ug} 
  - \int_\T (B \cdot \na)^2 dx_3 \, u_{g,{\rm h}} \: + \:  u_{g,{\rm h}} \cdot\na_{\rm h}  \int_\T \na (B \cdot \na B_{\rm h}) dx_3 +\na_{\rm h} p  = \int_\T F_{\rm h} dx_3 \, , \quad \text{div} \, u_{g,{\rm h}} = 0 
  \end{equation} 
for some~$p = p(x_{\rm h})$  and 
  \begin{equation} \label{Poisson_ug}
  - \int_\T (B \cdot \na)^2 dx_3 \, u_{g,3} \:   =  \: \int_\T F_3 dx_3 \,.   
  \end{equation} 
This set of equations, where $t$ is only a parameter, can be seen  an analogue of the Poisson equation for the pressure in Navier-Stokes, or an analogue of the second order equation derived by Taylor in the case of the ball, {\it cf} the previous paragraph. Roughly, the system~\eqref{Stokes_ug}, that is satisfied by the horizontal part of the geostrophic field $u_{g,{\rm h}}$, looks like a Stokes equation, whereas the equation \eqref{Poisson_ug} satisfied by $u_{g,3}$ looks like a Poisson equation. The main difference is that the usual two-dimensional Laplacian operator is replaced by~ $\displaystyle \int_\T (B \cdot \na)^2 \, dx_3$. Moreover, the Stokes-like equation contains a zero order term.  
This  makes unclear the optimal conditions for which these equations are well-posed. Still, we can state the following result.
\begin{Prop} \label{solvability_ug}
Let $B = B(x)$ be smooth and divergence-free over $\T^3$. Assume that 
\begin{equation} \label{assumptionCS}
\text{ for all $x_{\rm h} \in \T^2$,  \: $B_{\rm h}(x_{\rm h},\cdot)$ has non-constant direction.}  
 \end{equation}
 Then, for  $\displaystyle \sup_{x_{\rm h} \in \T^2} \left| \int_\T  (B \cdot \na B_{\rm h})(x) dx_3 \right|$  small enough, and   for any smooth $F = F(x)$ with zero mean over $\T^3$,  equations \eqref{Stokes_ug} and \eqref{Poisson_ug} have  unique smooth solutions $u_{g,{\rm h}}$ and $u_{g,3}$ with zero mean over $\T^2$.  
 \end{Prop}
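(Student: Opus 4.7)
The plan is to reduce \eqref{Stokes_ug}--\eqref{Poisson_ug} to standard elliptic problems on $\T^2$ by exploiting a hidden divergence structure of the operator $L := -\int_\T (B \cdot \na)^2 dx_3$. For any $v = v(x_{\rm h})$, expanding $(B \cdot \na)^2 v$ and using $\pa_3 v = 0$ gives
\[
L v = -\int_\T (B \cdot \na B_{{\rm h},k})\,dx_3 \cdot \pa_k v \;-\; a_{jk}(x_{\rm h})\,\pa_j \pa_k v, \qquad a_{jk}(x_{\rm h}) := \int_\T B_{{\rm h},j} B_{{\rm h},k}\,dx_3 .
\]
Splitting $B \cdot \na B_{{\rm h},k}$ into its horizontal and vertical parts, integrating the $B_3 \pa_3 B_{{\rm h},k}$ contribution by parts in $x_3$, and using $\pa_3 B_3 = -\pa_j B_{{\rm h},j}$ from $\Div B = 0$, one recognizes the identity $\int_\T B \cdot \na B_{{\rm h},k}\,dx_3 = \pa_j a_{jk}$; hence $Lv = -\pa_j(a_{jk}\pa_k v)$ is a symmetric divergence-form elliptic operator on $\T^2$. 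The non-constant-direction assumption translates exactly into the strict positivity of the matrix $a(x_{\rm h})$ at each $x_{\rm h}$: indeed $\xi^T a(x_{\rm h}) \xi = \int_\T |B_{\rm h}(x_{\rm h},x_3) \cdot \xi|^2 dx_3$ can vanish for $\xi \neq 0$ only if $B_{\rm h}(x_{\rm h},\cdot)$ stays along the line orthogonal to $\xi$. Smoothness of $B$ and compactness of $\T^2$ then upgrade this to uniform ellipticity of $L$.

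With this in hand, equation \eqref{Poisson_ug} becomes a standard scalar divergence-form elliptic problem $L u_{g,3} = \int_\T F_3 dx_3$ on $\T^2$: constants lie in $\ker L$, the compatibility condition $\int_{\T^2}\int_\T F_3 = 0$ is provided by the zero-mean hypothesis on $F$, and Lax--Milgram in the zero-mean subspace of $H^1(\T^2)$ followed by elliptic bootstrap yields a unique smooth zero-mean solution. For the Stokes-like system \eqref{Stokes_ug}, I would set $\beta(x_{\rm h}) := \int_\T (B \cdot \na B_{\rm h})\,dx_3$ and formulate the problem variationally on
\[
V := \bigl\{ u \in H^1(\T^2)^2 \; : \; \divh u = 0, \; \textstyle\int_{\T^2} u = 0\bigr\},
\]
endowed with the bilinear form $(u,v) \mapsto \int_{\T^2} a_{jk}\, \pa_k u_i \pa_j v_i\,dx_{\rm h} + \int_{\T^2} u_j (\pa_j \beta_k) v_k\,dx_{\rm h}$.

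The delicate step --- and the main obstacle --- is coercivity: the zero-order term $u \cdot \na \beta$ has no sign, and a naive bound would require control of $\|\na \beta\|_{L^\infty}$, a quantity \emph{not} provided by the hypothesis of the proposition. This is circumvented by integrating by parts on the zero-order term and using $\Div u = 0$:
\[
\int_{\T^2} u_j (\pa_j \beta_k) u_k\,dx_{\rm h} = -\int_{\T^2} \beta_k u_j \pa_j u_k\,dx_{\rm h},
\]
so that together with the Poincar\'e inequality on $V$ the zero-order contribution is bounded by $C_P \|\beta\|_{L^\infty}\, \|\na u\|_{L^2}^2$. Combined with the uniform ellipticity of $a_{jk}$, this yields coercivity of the bilinear form on $V$ as soon as $\|\beta\|_{L^\infty}$ lies strictly below some explicit threshold --- precisely the smallness assumption on $\sup_{x_{\rm h}} |\int_\T (B \cdot \na B_{\rm h}) dx_3|$. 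Lax--Milgram then produces a unique $u_{g,{\rm h}} \in V$, the pressure $p = p(x_{\rm h})$ is recovered via de Rham, and smoothness of $(u_{g,{\rm h}}, p, u_{g,3})$ is a routine consequence of Stokes-type elliptic regularity applied to smooth coefficients and smooth data.
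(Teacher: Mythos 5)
Your proof is correct and follows essentially the same route as the paper: a variational formulation on the relevant zero-mean subspaces, with coercivity of the principal part coming from the non-constant-direction hypothesis (your pointwise positivity of $a(x_{\rm h})$ is the same fact the paper expresses as a strict Cauchy--Schwarz inequality), the zero-order term absorbed via the smallness hypothesis, and smoothness from elliptic regularity. The divergence-form identity $\int_\T B\cdot\na B_{{\rm h},k}\,dx_3 = \pa_j a_{jk}$ and the integration by parts that removes $\na\beta$ from the coercivity estimate are stated explicitly in your argument but are already built into the paper's variational form \eqref{VF_Stokes_ug}, so they do not constitute a genuinely different approach.
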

 \begin{proof} We first consider  \eqref{Stokes_ug}. One can associate to it the variational formulation 
 \begin{equation} \label{VF_Stokes_ug}
 \begin{aligned}
  \int_{\T^2} \int_\T  \left(B_{\rm h} \cdot\na_{\rm h} u_g \right)   \cdot \left(B_{\rm h} \cdot\na_{\rm h} \varphi \right)   d x_3 \, d x_{\rm h}  \: & + \:  \int_{\T^2} \int_\T  \left( B \cdot \na B_{\rm h} \right)    \, dx_3  \, \cdot \,  (u_g \cdot\na_{\rm h} \varphi) \, dx_{\rm h} \\ 
  & = \int_{\T^2} \int_\T F_h \, dx_3 \, \cdot  \varphi \, dx_{\rm h} 
 \end{aligned}
 \end{equation}
 for all  $\varphi$ in $H^1_\sigma(\T^2)$, that is the set of 2d solenoidal  vector fields in $H^1(\T^2)^2$ with zero mean. 
 Assumption \eqref{assumptionCS} is equivalent to the strict Cauchy-Schwarz inequality 
\begin{equation} \label{strictCS}
|\int_\T (B_1 B_2)(\cdot,x_3) dx_3 |^2 \: < \:  \int_\T |B_1(\cdot,x_3)|^2 dx_3 \, \int_\T |B_2(\cdot,x_3)|^2 dx_3 \, .  
\end{equation}
uniformly in $x_{\rm h}$. It follows that  for any $f = f(x_{\rm h})$ with zero mean 
$$ \int_{\T^2} \int_\T | B_{\rm h} \cdot\na_{\rm h} f |^2  \, dx_3 \, dx_{\rm h} \ge c  \int_{\T^2}  |\na_{\rm h} f |^2  \, dx_{\rm h}  $$ 
so that the first term in \eqref{VF_Stokes_ug} is coercive over $H^1_\sigma(\T^2)$. 
Under the smallness assumption of the proposition, the whole left hand-side of \eqref{VF_Stokes_ug} is coercive, which yields a unique solution~$u_{g,{\rm h}}$ by the Lax Milgram lemma. The smoothness of $u_{g,{\rm h}}$ follows from classical elliptic regularity, as the principal symbol of the operator $\int_\T (B \cdot \na)^2 \, dx_3$ is uniformly elliptic under \eqref{strictCS}. 

The case of \eqref{Poisson_ug} is similar and easier:  the existence of a unique smooth solution $u_{g,3}$ is obtained under the single assumption \eqref{assumptionCS}, as there is no zero order term. 
 \end{proof}
 Proposition \ref{solvability_ug} opens the way towards a local well-posedness result of the Taylor system\eqref{Taylor}. Indeed, if the initial datum $B_0$ is smooth, satisfies \eqref{assumptionCS} and if 
  $$\text{$\sup_{x_{\rm h} \in \T^2} \left| \int_\T (B_0 \cdot \na B_{0,{\rm h}})(x) dx_3 \right|$ is small enough}$$
 (for instance if it is zero), then the right-hand side of \eqref{inductionmg} is  well-defined and smooth at initial time, and one may hope to solve the equation at least for short time.   Note furthermore  that~\eqref{Taylor} is (formally) dissipative: we find
 \begin{align*}
  \frac{1}{2} \frac{d}{dt} \| B \|_{L^2(\T^3)}^2 +  \| \na B \|^2_{L^2(\T^3)} & = - \int_{\T^3}  \curl(u \times B) \cdot B  \\
& = \int_{\T^3} u \cdot (\curl B \times B)  = \int_{\T^3} u \cdot (e \times u + \na p) = 0 \, . 
 \end{align*}
Still, this energy decay is not enough to build strong solutions, because it does not provide a control of higher order derivatives. The current paper, devoted to a linearized analysis of~\eqref{Taylor}, can be  seen as a first step in the study of these derivatives.

   \subsection{Linearization and statement of the main result}
From now on,  we study the well-posedness of simple linearizations of the Taylor model in $\T^3$. With Proposition \ref{solvability_ug} in mind, we consider reference states of the form~$(u = 0, B)$ with 
 \begin{equation}\label{formofB}
   B(x):= (B_1(x_3), B_2(x_3),0) \,
   \end{equation}
   where~  $B$ has zero mean and  non-constant direction, meaning:  
\begin{equation} \label{nonconstant}
\forall \eta \in \mathbb{S}^1, \quad \| B_{\rm h} \cdot \eta \|_{L^2(\T)} > 0 \, .  
\end{equation}  
This last assumption is made coherently with Proposition \ref{solvability_ug}. Note that  $\displaystyle \int_\T B \cdot \na B_{\rm h} = 0$, so that $B$ also satisfies the smallness assumption of the proposition. Let us mention that the couple~$(u=0,B)$ is not a solution of the source free Taylor model \eqref{Taylor}: one should add a forcing term $f = - \Delta B$ at the right hand-side of (\ref{Taylor}b). But this is a usual approximation, reminiscent of the study of shear flow stability in fluid dynamics.

 The linearized system is then
  \begin{equation}
  \label{linear}
  \begin{aligned}
 & e \times u + \na p = \curl b \times B + \curl B \times b \\
  & \pa_t b = \curl (u \times B) + \Delta b \\
  & \Div u = \Div b = 0
 \end{aligned}
  \end{equation}
 where $b$ is now the magnetic perturbation. The main result of this paper is the following well-posedness result: 
 \begin{thm}\label{mainthm}
Assume that $B$ is a zero-mean smooth function of the form \eqref{formofB}, satisfying assumption \eqref{nonconstant}.  If $b_0$ belongs to~$L^2_\sigma(\T^3)$, system \eqref{linear} has a unique solution 
$(u,b)$ such that 
$$ b \in C(\R_+, L^2_\sigma(\T^3)) \cap L^2_{loc}(\R_+, H^1_\sigma(\T^3)), \quad u \in L^2_{loc}(\R_+, H^{-1}_\sigma(\T^3)), $$
satisfying for  some constant~$C$ and for all $t \ge 0$: 
\begin{equation} \label{theresult}
\|b(t)\|^2_{L^2}  + \int_0^t \| \na b(s) \|_{L^2}^2 ds \leq C \| b_0 \|^2_{L^2} \exp (Ct ) \, .
\end{equation}
\end{thm}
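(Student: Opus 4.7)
The plan is to turn \eqref{linear} into a closed parabolic problem for $b$ alone by solving for $u$ in terms of $b$ via the first equation, to derive a parabolic-type energy estimate, and to conclude by a standard Galerkin construction.

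\emph{Step~1: elimination of $u$.} Writing $\cC u := \PP(e\times u)$ for the Coriolis operator on $L^2_\sigma(\T^3)$ and $\ell(b) := \curl b\times B + \curl B\times b$, the first equation of \eqref{linear} reads $\cC u = \PP(\ell(b))$. The operator $\cC$ is zero-order and skew-symmetric, with kernel equal to the space of geostrophic fields \eqref{geostrophic}, so boundedly invertible on $(\ker \cC)^\perp$. The magnetostrophic part $u_m\in(\ker\cC)^\perp$ is then defined uniquely, and for every $s\in \R$ one has
\[
\|u_m\|_{H^s(\T^3)} \le C \|\ell(b)\|_{H^s(\T^3)},
\]
so that, since $B$ is smooth, $\|u_m\|_{H^{-1}}\le C\|b\|_{L^2}$. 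The geostrophic complement $u_g[b]$ is then fixed by demanding that the linearised Taylor constraint be preserved in time; assumption \eqref{nonconstant}, via the strict Cauchy--Schwarz inequality \eqref{strictCS}, provides coercivity of the resulting linearised Stokes--Poisson system exactly as in Proposition~\ref{solvability_ug}, yielding a unique $u_g[b]$ with an analogous bound and hence a bounded linear map $b\mapsto u[b]$ from $L^2_\sigma(\T^3)$ into $H^{-1}_\sigma(\T^3)$.

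\emph{Step~2: energy estimate.} Testing the induction equation against $b$ and integrating by parts,
\[
\tfrac12\tfrac{d}{dt}\|b\|_{L^2}^2 + \|\na b\|_{L^2}^2 = \int_{\T^3}\curl(u\times B)\cdot b\, dx = -\int_{\T^3} u\cdot(\curl b\times B)\,dx.
\]
The cancellation is the heart of the matter: from the first equation, $\curl b\times B = e\times u + \na p - \curl B\times b$, and pairing against $u$ kills the first two terms (by skew-symmetry of $e\times\cdot$ and by $\Div u=0$), so
\[
\tfrac12\tfrac{d}{dt}\|b\|^2_{L^2} + \|\na b\|_{L^2}^2 = \int_{\T^3} u\cdot(\curl B\times b)\,dx.
\]
The right-hand side carries no top-order derivative of $b$: since $\curl B$ is bounded and depends only on $x_3$, $\|\curl B\times b\|_{H^1}\le C\|b\|_{H^1}$, and by duality combined with Step~1,
\[
\Big|\int_{\T^3} u\cdot(\curl B\times b)\Big| \le \|u\|_{H^{-1}}\|\curl B\times b\|_{H^1} \le C\|b\|_{L^2}\|b\|_{H^1} \le \tfrac12\|\na b\|_{L^2}^2 + C'\|b\|_{L^2}^2.
\]
Hence $\tfrac{d}{dt}\|b\|^2 + \|\na b\|^2 \le 2C'\|b\|^2$, and Gronwall together with integration in time yields \eqref{theresult}.

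\emph{Step~3: existence, uniqueness, and main obstacle.} Existence would follow by a Galerkin scheme on a basis of $L^2_\sigma(\T^3)$: at each level $N$ one defines the approximation $u^N=u[b^N]$ by Step~1, the resulting finite-dimensional ODE for $b^N$ is globally solvable, the estimate of Step~2 is uniform in $N$, and one extracts a weakly convergent subsequence whose limit is a solution of the original problem in the appropriate weak sense. Uniqueness comes from Step~2 applied to the difference of two solutions sharing the same initial datum. The principal difficulty I foresee is Step~1, specifically the rigorous construction of the geostrophic Lagrange multiplier $u_g[b]$ and the proof of the bound $\|u_g[b]\|_{H^{-1}}\le C\|b\|_{L^2}$: one must write down and analyse the linearised version of the Stokes--Poisson system \eqref{Stokes_ug}--\eqref{Poisson_ug} arising from the preservation of the Taylor constraint, whose coercivity rests on \eqref{strictCS} but whose source terms (involving $b$ and $u_m[b]$) must be tracked carefully to ensure the bound is genuinely linear in $\|b\|_{L^2}$.
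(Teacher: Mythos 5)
Your Step~1 contains a fundamental error that invalidates Step~2. You claim that the Coriolis operator $\cC=\PP(e\times\cdot)$ is ``boundedly invertible on $(\ker\cC)^\perp$,'' and deduce $\|u_m\|_{H^{-1}}\le C\|b\|_{L^2}$. This is false. Although $\cC$ is zero-order, bounded, and skew-symmetric, it has no spectral gap: on $\T^3$, the Fourier symbol of $\cC$ on divergence-free modes is $\pm i\,k_3/|k|$, which tends to zero along sequences $k=(k_{\rm h},k_3)$ with $k_3\neq 0$ fixed and $|k_{\rm h}|\to\infty$. Hence $\cC^{-1}$ is unbounded on $(\ker\cC)^\perp$. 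Concretely, as computed in Paragraph~\ref{paragraph_Taylor_torus} (see \eqref{formula_um}), the magnetostrophic field is $u_m=\pa_3^{-1}\curl\left(\curl b\times B+\curl B\times b\right)$: the $\pa_3^{-1}$ smooths in $x_3$ but gains nothing in $x_{\rm h}$, so $u_m$ is a genuinely \emph{second-order} operator in $(x_1,x_2)$ applied to $b$. The correct bound is $\|u\|_{H^{-1}}\le C\|b\|_{H^1}$ (with $C$ growing with the amplitude of $B$), not $C\|b\|_{L^2}$.

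Propagating this through Step~2, the term $\int u\cdot(\curl B\times b)$ is controlled only by $C_B\|b\|_{H^1}^2$, which \emph{cannot} be absorbed by the dissipation $\|\na b\|^2_{L^2}$ when $C_B>1$ (i.e.\ for $B$ large). This is exactly the obstruction the paper flags explicitly at the end of Section~1: in the reformulation $\pa_t b=\mathcal{A}b+\mathcal{C}b+\mathcal{R}b+\Delta b$, the third-order part $\mathcal{A}$ of $\curl(u\times B)$ is skew-adjoint and contributes nothing to $(\,\cdot\,|b)$ --- your cancellation computation via the first equation of \eqref{linear} is correct and encodes exactly this fact --- but a genuinely second-order symmetric piece $\mathcal{C}$ survives, with $(\mathcal{C}b|b)\sim c_B\|\na b\|_{L^2}^2$ possible. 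The whole content of the paper is to defeat this term: after reducing to each horizontal Fourier mode and to high frequencies, one constructs via the spectral decomposition of $A$ (Proposition~\ref{spectraldec}) a normal-form conjugation $b\mapsto({\rm Id}+\eps Q_N)b$ so that the commutator $[Q_N,A]$ cancels $\Pi_N^\flat C\Pi_N^\flat$ (Proposition~\ref{prop_Q}), while the divergence-free constraint (Proposition~\ref{estimateC2}), the Taylor constraint (Lemma~\ref{taylorpie0}), and a spectral-cutoff estimate (Lemma~\ref{highmodeslemma}) control the remainder. You correctly anticipate that the geostrophic multiplier $u_g[b]$ requires care, but you locate the difficulty in the wrong place: the problem is not tracking linearity of the source terms, but the fact that the $H^{-1}$-bound you need is simply unavailable, so the naive Gronwall argument cannot close.
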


%  Similarly to the nonlinear model \eqref{Taylor}, (\ref{linear}a) will be solvable if and only if
%   \begin{equation} \label{linear_constraint}
% \int_\T \curl \left( \curl b \times \B + \curl \B \times b \right) dx_3 = 0 \, .  
% \end{equation}
% If this latter relation is satisfied, we have a decomposition $u = u_m + u_g$, with 
% \begin{equation}\label{formulaum} 
% u_m = \Pi\pa_3^{-1} \curl \left( \curl b \times \B + \curl \B \times b \right) \, , 
% \end{equation}
% and a geostrophic field $u_g$ (in the sense of \eqref{geostrophic}). 
% In the following we   consider the special solutions {\tt mettre aussi le cas~$\B_3(x_1,x_2)$ ?}
%   \begin{equation}\label{formofB}
%   \B(x):= (\B_1(x_3), \B_2(x_3),0) \, ,
%   \end{equation}
%   where~$\B$ is assumed to be mean free, meaning
%   \begin{equation}\label{Bmeanfree}
%   \Pi \B = \B \, \, .
%   \end{equation}
%{\tt mettre un commentaire sur l'absence de d\'ependence en $t$} With the algebraic identity 
%$$ \curl (a \times b) = b \cdot \na a - a \cdot \na b$$
%valid for all divergence-free fields $a,b$, we can also write the   system as : 
%  \begin{equation}\label{linearizedsystem2}
%\left\{\begin{aligned}
%&	\partial_t b  + u \cdot \na \B - \B \cdot \na u - \Delta b  = 0 \quad \mbox{in} \quad \R^+ \times \T^3\\
%%&	u =u_m+u_g \, , \quad   u_m= \pa^{-1}_3  \left(  b \cdot \na \curl \B  + \B \cdot \na \curl b  - \curl \B \cdot \na b - \curl b \cdot \na \B \right)  \\
%&	\mbox{div} \, b =  0  \\
%& b_{|t=0} = b_0\, .
%		\end{aligned}
%		\right.
%\end{equation}
%

\medskip
The rest of the paper is dedicated to the proof of this theorem. As usual, the keypoint is to establish an {\it a priori} estimate of type \eqref{theresult} for smooth solutions of \eqref{linear}.  The existence and uniqueness of a solution follows then from standard arguments. But  the derivation of this {\it a priori} bound is far from obvious. The difficulty comes  from the so-called induction term~$\curl(u \times B)$ in (\ref{linear}b). As will be seen below, one can express $u = u[b]$ as a linear operator in $b$, second order in  variables~ $(x_1,x_2)$. It follows that one can write~$\curl(u \times B)  $ as~$\curl(u \times B) = L_B b$, where $L_B$ is a third order operator in $(x_1, x_2)$. The principal part of this operator $L_B$ is shown to be skew-symmetric, but second order terms remain:  one may find fields $b = b(x)$ such that 
$$ \int_{\T^3} L_B b  \cdot  b  \ge c_B \| \na b \|_{L^2}^2$$
where roughly, the constant $c_B > 0$ may grow with the amplitude of $B$. In particular, if $B$ is large, this term can not be absorbed in a standard energy estimate by the term coming from the laplacian in $b$.      
Hence, the linear system \eqref{linear} may be ill-posed, with growth similar to the one of the  backward heat equation.  

The point of the paper is to show that such instability  does not occur. It is based on a careful normal form argument,  annihilating the second order symmetric part by the third order skew-symmetric one.   

\section{Preliminaries and reductions}
\subsection{Computation of the linear Taylor constraint and the magnetostrophic field}\label{computationmagnetostropic}
In order to prove Theorem~\ref{mainthm}, we first need  to compute~$\curl (u\times B)$ in terms 
of~$b$. The approach is the same as in the nonlinear analysis of Paragraph \ref{paragraph_Taylor_torus}. We first focus on equation (\ref{linear}a), which amounts~to 
$$ \pa_3 u = \curl \left( \curl b \times B + \curl B \times b \right) $$
and	is solvable if and only if 
\begin{equation} \label{linear_constraint}
\int_\T \curl \left( \curl b \times B + \curl B \times b \right) dx_3 = 0 \, .  
\end{equation}
Under this  constraint,  (\ref{linear}a) has a unique solution $u_m$ with zero mean in $x_3$. 
Let 
\begin{equation}
 \pa_3^{-1} \:  :=  \: {\mathcal P} \int_0^{x_3} {\mathcal P} \, , \quad \text{where} \quad {\mathcal P} \, f \: :=  \: f - \int_\T f dx_3  
 \end{equation}
is the projection onto functions with zero mean in $x_3$.   Then: 
\begin{equation}\label{formulaum} 
 u_m \: = \:  u_{m,1}[b] + u_{m,2}[b] \, ,  
\end{equation}
with 
$$  u_{m,1}[b] \:  := \:  \pa_3^{-1} \curl \left( \curl b \times B \right), \quad u_{m,2}[b] \: := \: \pa_3^{-1} \curl \left( \curl B \times b \right).  $$
It is easily seen that $b \mapsto \curl \left( u_{m,1}[b] \times B \right)$ is skew-adjoint with respect to the $L^2$ scalar product. Moreover, relying on the identity \eqref{curl_identity}, we can write:
\begin{equation}
\begin{aligned}
 \curl \left( u_{m,1}[b] \times B \right) 
& = \: B_{\rm h} \cdot\na_{\rm h}   \pa_3^{-1}  \left(B_{\rm h} \cdot\na_{\rm h} \curl  b   -   \curl b  \cdot \na B \right) \\ 
& \quad -   \pa_3^{-1} \left(  B_{\rm h} \cdot\na_{\rm h} \curl  b   - \curl b  \cdot \na B \right)   \cdot \na B \\
& =  \: B_{\rm h} \cdot\na_{\rm h} \pa_3^{-1} B_{\rm h} \cdot\na_{\rm h} \left( \begin{smallmatrix}\na_{\rm h} \\ 0 \end{smallmatrix} \right)   \times b  \: + \:  \mathcal{R}_{m,1} b  \, .
\end{aligned}
\end{equation}
Again,  $b \mapsto B_{\rm h} \cdot\na_{\rm h} \pa_3^{-1} B_{\rm h} \cdot\na_{\rm h} \left( \begin{smallmatrix}\na_{\rm h} \\ 0 \end{smallmatrix} \right)   \times b$ is skew-adjoint with respect to the $L^2$ scalar product. It is a third operator in variables $x_1, x_2$, regularizing in $x_3$ thanks to   $\pa_3^{-1}$.  The remaining operator $\mathcal{R}_{m,1}$ is also skew-adjoint (because the total operator is), second order in $x_1,x_2$, bounded in $x_3$.   
As regards $b \rightarrow \curl(u_{m,2}[b] \times B)$, we use again identity \eqref{curl_identity}, and write 
\begin{equation}
\begin{aligned}
  \curl \left( u_{m,2}[b] \times B \right)  & =  B_{\rm h} \cdot\na_{\rm h}  \pa_3^{-1}  \left( b \cdot \na \curl B - (\curl B)_{\rm h} \cdot\na_{\rm h} b \right) \\     
&  -   \pa_3^{-1}  \left( b \cdot \na \curl B - (\curl B)_{\rm h} \cdot\na_{\rm h} b \right) \cdot \na B  \, .
\end{aligned}
\end{equation}	
We recall that $B$ depends only on $x_3$, so that  the vertical component of $\curl B$ is zero. 
In the right-hand side, only the term  $b \rightarrow - B_{\rm h} \cdot\na_{\rm h}  \pa_3^{-1} (\curl B)_{\rm h} \cdot\na_{\rm h} b$ is second order in $x_1,x_2$ (and regularizing in $x_3$), all other terms are first order in $x_1,x_2$ (and regularizing  in $x_3$).  We can split this second order term into self-adjoint and skew-adjoint terms:  we end up with
\begin{equation}
    \curl \left( u_{m,2}[b] \times B \right)  
      = - \frac 12 \big( B_{\rm h} \cdot\na_{\rm h}  \pa_3^{-1}  (\curl B)_{\rm h} \cdot\na_{\rm h} b - (\curl B)_{\rm h} \cdot\na_{\rm h} \pa_3^{-1} B_{\rm h} \cdot\na_{\rm h} b \big) +  \mathcal{R}_{m,2} b 
\end{equation} 
where the operator $\mathcal{R}_{m,2}$ gathers a second order skew-adjoint operator and a first order operator in $x_1, x_2$. 
Eventually, we can write 
\begin{equation}  \label{decompo_um}
\curl \left( u_m \times b \right)  \: = \: \mathcal{A}_m b \: + \: \mathcal{C}_m b \: + \: \mathcal{R}_m b 
\end{equation}
where~$ \mathcal{A}_m $ and~$ \mathcal{C}_m $ are respectively    third and second order operators, 
with~$ \mathcal{A}_m $
 skew-adjoint  and~$ \mathcal{C}_m $ self-adjoint, defined by
\begin{equation} \label{defiAmCm}
\begin{aligned}
 \mathcal{A}_m b \: & := \: B_{\rm h} \cdot\na_{\rm h} \pa_3^{-1} B_{\rm h} \cdot\na_{\rm h} \left( \begin{smallmatrix}\na_{\rm h} \\ 0 \end{smallmatrix} \right)   \times b \\
\mathcal{C}_m b \: &  := \: - \frac{1}{2}B_{\rm h} \cdot\na_{\rm h}  \pa_3^{-1}  (\curl B)_{\rm h} \cdot\na_{\rm h} b \: + \: \frac{1}{2}(\curl B)_{\rm h} \cdot\na_{\rm h} \pa_3^{-1} B_{\rm h} \cdot\na_{\rm h} b
\end{aligned}
\end{equation}
and where $\mathcal{R}_{m}$ is the sum of a skew-adjoint operator of second order in $x_1,x_2$ and of a first order operator in $x_1,x_2$ (both bounded in $x_3$). In particular
\begin{equation} \label{estimRm}
\begin{aligned}
 \| \mathcal{R}_m b \|_{L^2(\T^3)} \:   \le \: C\, \| b \|_{H^2(\T^3)} \, , & \quad  \Big| \int_{\T^3}  \mathcal{R}_m b \cdot  b  \, dx\Big| \: \le \: C \, \| b \|_{H^1(\T^3)}   \,  \| b \|_{L^2(\T^3)} \, ,  \\ 
 & \text{for all smooth fields $b$ with zero mean.}
\end{aligned}
\end{equation}

\subsection{Computation of the geostrophic field}\label{computationgeostropic}
Like in the nonlinear Taylor system, any solution $u$ of (\ref{linear}a) reads
$u = u_m + u_g$, where $u_m$ is defined in \eqref{formulaum}, and $u_g$ is in the kernel of the Coriolis operator. As before, $u_g = u_g(t,x_{\rm h})$, with $\Div_{\rm h} u_{g,{\rm h}}  = 0$. Then, 
\begin{equation} \label{induction_linear}
 \pa_t b = \curl(u_m \times B) + \curl(u_g \times B) + \Delta b \, . 
 \end{equation}
 The term $\curl(u_g \times B)$ is there to preserve  the linear Taylor constraint \eqref{linear_constraint} through time. To determine $u_g$, we proceed exactly as in the nonlinear case, see the lines around \eqref{Stokes_ug}  and~ \eqref{Poisson_ug}. Here, we get 
\begin{equation} \label{linear_ug}
 \left( \begin{smallmatrix} \mathbb{P}_{\rm h} &  \\ & 1 \end{smallmatrix}  \right) (-\Delta_B) u_g =    \left( \begin{smallmatrix} \mathbb{P}_{\rm h} &  \\ & 1 \end{smallmatrix}  \right)  \int_\T F \, dx_3 
 \end{equation} 
 where 
 $$ F = B_{\rm h} \cdot\na_{\rm h} (\curl(u_m \times B) + \Delta b) +  (\curl(u_m \times B) + \Delta b) \cdot \na B  $$ 
 and 
 $$ \Delta_B \: := \: \int_\T (B_{\rm h} \cdot\na_{\rm h})^2 \, dx_3 \, . $$
The proof of Proposition \ref{solvability_ug}, on the well-posedness of \eqref{Stokes_ug}-\eqref{Poisson_ug},  applies to \eqref{linear_ug}. Under assumption \eqref{nonconstant}, if $b$ is smooth  (which implies that  $u_m$ is smooth),  \eqref{linear_ug} has a unique smooth  solution with zero mean: 
$$u_{g}  = \left( \begin{smallmatrix} \mathbb{P}_{\rm h} &  \\ & 1 \end{smallmatrix}  \right) (-\Delta_B)^{-1}  \int_\T F \, dx_3. $$
Note that $\mathbb{P}_{\rm h}$ and $\Delta_B$ commute, since~$B$ only depends on~$x_3$. Note also that $\Delta_B$ is a second order operator,  uniformly elliptic by assumption \eqref{nonconstant}. Hence, $(-\Delta_B)^{-1}$ gains two derivatives in the Sobolev scale. 
Taking this into account, and using the decomposition \eqref{decompo_um}, we  can write 
\begin{equation} 
u_g  \: = \: u_g^{high}[b] \: + \: u_{g}^{low}[b] 
\end{equation}
where~$u_g^{high}$ and~$u_g^{low}$ are both quasigeostrophic and satisfy 
\begin{equation}\label{defughigh}
u_g^{high}[b]   =  \left( \begin{smallmatrix} \mathbb{P}_{\rm h} &  \\ & 1 \end{smallmatrix}  \right) (-\Delta_B)^{-1} \int_\T B_{\rm h}(x_3) \cdot\na_{\rm h} \,  \mathcal{A}_m b(\cdot,x_3) \, dx_3 \, ,
\end{equation}
and where $u_g^{low}$ satisfies the estimate 
\begin{equation}
 \| u_g^{low}[b] \|_{H^s(\T^3)} \: \le \: C \,  \| b \|_{H^{s+1}(\T^3)} 
 \end{equation} 
for all~$s \geq 0$ and all smooth divergence-free fields $b$ with zero mean. It follows that 
\begin{equation} \label{uglow_1}
\Big \| \curl \Big( u_g^{low}[b] \times B \Big)\Big  \|_{H^s(\T^3)} \: \le \: C \,  \| b \|_{H^{s+2}(\T^3)}  \, .
\end{equation}
The operator~$ \curl  ( u_g^{low}[\cdot] \times B  )$ therefore is a second order operator, but one notes that if $b$ satisfies \eqref{linear_constraint}, one has 
$$  \int_{\T^3} \left(\curl b \times B  + \curl B \times b \right)  \cdot v_g  \,  \: = 0  \quad \text{ for all geostrophic fields $v_g$} \, . $$
With the choice $v_g = u_g^{low}$, we therefore obtain that 
 \begin{equation*}
 \begin{aligned}
\int_{\T^3}  \curl (u_g^{low}  \times B) \cdot  b  & =  \int_{\T^3}  u_g^{low} \cdot   (B    \times \curl b)   \\
& =  \int_{\T^3}  u_g^{low} \cdot \left(  \curl B \times b \right)
 \end{aligned}
\end{equation*} 
so that 
\begin{equation}  \label{uglow_2}
\left| \int_{\T^3}  \curl (u_g^{low}  \times B) \cdot  b  \right| \: \le \: C \| b \|_{H^1(\T^3)} \, \| b \| _{L^2(\T^3)} \, .
\end{equation}
Hence, the operator $ \curl  ( u_g^{low}[\cdot] \times B  )$ will act as a first order operator in an~$L^2$ energy estimate.
Eventually, as regards~$u_g^{high}[b]$, we can notice that 
$$ \int_\T (B_{\rm h} \cdot\na_{\rm h} \mathcal{A}_m b)_{\rm h}  dx_3  =-\na_{\rm h}^{\perp} \int_\T \left(  (B_{\rm h} \cdot\na_{\rm h})^2  \pa_3^{-1} (B_{\rm h} \cdot\na_{\rm h}) b \right)_3 \, dx_3 $$
where we have noted~$\na_{\rm h}^{\perp}  =\left(\begin{smallmatrix} -\partial_2 \\  \partial_1 \\ 0 \end{smallmatrix}  \right)$ so that it is a 2d divergence-free field. Thus, there is no need for $\mathbb{P}_{\rm h}$ in~(\ref{defughigh}), and we find 
\begin{equation} 
u_g^{high}[b] = (-\Delta_B)^{-1} \int_\T B_{\rm h}(x_3) \cdot\na_{\rm h} \,  \mathcal{A}_m b(\cdot,x_3) \, dx_3 \, . 
\end{equation}
We can then write 
$$ \curl(u_g^{high} \times B) = B_{\rm h} \cdot\na_{\rm h} u_g^{high} - u_g^{high} \cdot \na B = \Pi \mathcal{A}_m b - u_g^{high} \cdot \na B  $$
where $\Pi$ is the self-adjoint bounded operator defined by 
\begin{equation}
 \Pi  f \:: = \:   B_{\rm h} \cdot \na_{\rm h}  (-\Delta_B)^{-1}  \int_{\T}  B_{\rm h} \cdot\na_{\rm h} f \, dx_3 \, . 
\end{equation} 
Eventually, to mimic decomposition \eqref{decompo_um}, we write 
\begin{equation}
\curl(u_g \times B) = \mathcal{A}_g b + \mathcal{C}_g b + \mathcal{R}_g b  
\end{equation}
where 
\begin{equation}  \label{defiAgCgRg}
\begin{aligned} 
\mathcal{A}_g b  \:  & := \:  \Pi \mathcal{A}_m b  \: +\mathcal{A}_m \Pi b \, , \\  
\mathcal{C}_g b  \:  & := \:  - \mathcal{A}_m \Pi b - u_g^{high}[b]  \cdot \na B \, , \\
\mathcal{R}_g b  \:  & :=  \: \curl (u_g^{low}[b]  \times B) \, .
\end{aligned}
\end{equation} 
Note that $\mathcal{A}_g$, like~$\mathcal{A}_m$,  is skew-adjoint. In contrast~$\mathcal{C}_g$  is not self-adjoint, contrary to~$\mathcal{C}_m$. Also, by \eqref{uglow_1}-\eqref{uglow_2} we have for all  smooth $b$ with zero mean
\begin{equation} \label{estimRg}
\begin{aligned} 
  \| \mathcal{R}_g b \|_{L^2(\T^3)} & \le  C\, \| b \|_{H^2(\T^3)}\quad \text{and}   \\
  \Big| \int_{\T^3}  \mathcal{R}_g b \cdot  b \, dx \Big| &\le  C \, \| b \|_{H^1(\T^3)}   \,  \| b \|_{L^2(\T^3)}  \quad  \text{for all smooth $b$  satisfying \eqref{linear_constraint} .}
\end{aligned}
\end{equation}

\subsection{Fourier transform}
From the two previous paragraphs, we can reformulate the linearized Taylor system \eqref{linear}  in terms of $b$ only.  The evolution equation on $b$ is
\begin{equation} \label{induction_new}
\pa_t b  = \mathcal{A} b + \mathcal{C} b + \mathcal{R} b   + \Delta b
\end{equation} 
with 
$$ \mathcal{A} = \mathcal{A}_m + \mathcal{A}_g \, , \quad \mathcal{C} = \mathcal{C}_m + \mathcal{C}_g \, , \quad \mathcal{R} = \mathcal{R}_m + \mathcal{R}_g \, , $$
see \eqref{defiAmCm}-\eqref{estimRm},\eqref{defiAgCgRg}-\eqref{estimRg}. Moreover, the solution $b$ should satisfy the linear Taylor constraint and divergence-free constraints, namely
\begin{equation}  \label{constraints}
 \left( \begin{smallmatrix} \mathbb{P}_{\rm h} &  \\ & 1 \end{smallmatrix}  \right)   \int_{\T^3} \left(\curl b \times B  + \curl B \times b \right) = 0 \, , \quad \Div b =0 \, .
 \end{equation}

\medskip
The  point is to establish the {\it a priori} estimate \eqref{theresult} for smooth solutions $b$ of \eqref{induction_new}-\eqref{constraints}.
 This is easier after taking a horizontal Fourier transform, since~$B$ does not depend on~$x_{\rm h}  $.  Given horizontal Fourier modes~$\xi:=(\xi_1,\xi_2) \in (2\pi \Z)^2$, we write
$$
b(t,x) = e^{i\xi\cdot x_{\rm h}} \widehat b(t,\xi,x_3) \, .   
$$
Is is easily seen that the zero  mode  $\widehat b(t,0,x_3)$ satisfies the heat equation, so that it decays exponentially. We can therefore focus on the case $\xi \neq 0$.  From now on we omit the dependence of $\widehat b$ on~$\xi$ which is fixed and simply write~$ \widehat b(t,x_3)$.

 We  introduce the notation, recalling that~$\xi^{\perp}:  =\left(\begin{smallmatrix} -\xi_2 \\  \xi_1 \end{smallmatrix}  \right)$,
\begin{equation}  \label{defi_beta}
\beta_\xi(x_3) \: := \: B_{\rm h}(x_3) \cdot \xi\,,  \quad \beta'_{\xi^\perp}(x_3) \: := \:      B_{\rm h}'(x_3) \cdot \xi^\perp\,, \quad e_0(x_3) := \frac{\beta_\xi(x_3)}{\| \beta_\xi \|_{L^2(\T)}} \, \cdotp 
\end{equation}
Let us stress that: 
$$ \exists c, \, C > 0, \quad         c \, |\xi| \le  \| \beta_\xi \|_{L^2(\T)} \: \le \:  C \, |\xi|, \quad \forall \xi \in (2\pi\Z)^2 \, ,$$
the lower bound coming from \eqref{nonconstant}. 
We denote by $\Pi_{e_0}$ the orthogonal projection over $\R e_0$ in $L^2(\T)$. Namely
$$ \Pi_{e_0} f \: := \:  \left( \int_\T e_0  \,  f \, dx_3  \right) \, e_0  \: =  \:   \frac{1}{\|\beta_\xi\|_{L^2(\T)}^2}  \left( \int_\T \beta_\xi  \, f \, dx_3 \right) \beta_\xi \, . 
$$
To lighten notation, we will also call $\Pi_{e_0}$  the orthogonal  projection over $\R^3 e_0$ in $L^2(\T)^3$, acting component-wise: 
$\widehat b \mapsto (\Pi_{e_0} \widehat b_1, \Pi_{e_0} \widehat b_2, \Pi_{e_0} \widehat b_3)$.  

With such notations,  we find that~$\widehat \Pi = - \Pi_{e_0}$ and therefore
$$\pa_t \widehat b = \widehat{\mathcal{A}} \  \widehat b \: + \: \widehat{\mathcal{C}}  \  \widehat b +  \widehat{\mathcal{R}}  \,  \widehat b \: + \: 
(\pa^2_3 - |\xi|^2 )\widehat b \, , $$
with
\begin{equation}
\widehat{\mathcal{A}} \ \widehat b \: := \:  \left( \widehat{\mathcal{A}_m} - \Pi_{e_0}  \widehat{\mathcal{A}_m}  -   \widehat{\mathcal{A}_m} \Pi_{e_0} \right) \widehat b \, , 
 \end{equation}
\begin{equation} \label{defiChat}
\widehat{\mathcal{C}} \ \widehat b \:  := - \frac{1}{2} \beta_\xi \pa_3^{-1} \beta'_{\xi^\perp}  \, \widehat b +\frac{1}{2}  \beta'_{\xi^\perp} \pa_3^{-1} \beta_\xi \, \widehat b 
\: +  \:  \widehat{\mathcal{A}_m} \Pi_{e_0}  \widehat b  \:  - \frac{i}{\| \beta_\xi \|_{L^2(\T)}^2} \int_\T \beta_\xi  \left( \widehat{\mathcal{A}_m} \, \widehat b \right)_3 \, dx_3 \, B' \, ,
\end{equation}
where 
\begin{equation} 
\widehat{\mathcal{A}_m} \,  \widehat b  \: := \: -i \beta_\xi \pa_3^{-1} \beta_\xi \left( \begin{smallmatrix} \xi \\ 0 \end{smallmatrix} \right) \times \widehat b \, ,
\end{equation}
and with, for all smooth  $b$ with zero mean
\begin{equation}  \label{estimRhat}
\begin{aligned}
\| \widehat{\mathcal{R}} \, \hat{b} \|_{L^2(\T)}& \le C \, |\xi|^2 \, \| \widehat b \|_{L^2(\T)} \quad \text{and} \\
 \big|  \big( \widehat{\mathcal{R}} \, \widehat{b} |  \widehat{b} \big) \big|& \le   C  \,   |\xi|   \, \| \widehat b \|_{L^2(\T)}  \quad  \text{for all smooth $b$ satisfying \eqref{linear_constraint}.}
 \end{aligned}
\end{equation}
See \eqref{estimRm}-\eqref{estimRg}. Here, $( \: | \: )$ is the usual scalar product on $L^2(\T,\bC)$.

The operator $\widehat{\mathcal{A}}$ is skew-selfadjoint, and therefore satisfies, 
\begin{equation} \label{estimAhat}
 ( \widehat{\mathcal{A}} \ \widehat{b} | \widehat{b} )   \: = \: 0\,.
 \end{equation}
Furthermore, one can notice that  
\begin{equation} \label{PiAmPi}
\Pi_{e_0} \widehat{\mathcal{A}_m} \Pi_{e_0} = 0
\end{equation}
 due to the fact that for all $V \in \R^3$: 
$$ \int_\T \beta_\xi \cdot \widehat{\mathcal{A}_m} \, (\beta_\xi V)  \, dx_3 \: = \: -i \int_\T\beta_\xi^2 \pa^{-1}_3 \beta_\xi^2 \, dx_3 \,  \left( \begin{smallmatrix} \xi \\ 0 \end{smallmatrix} \right) \times V  = 0\,. $$
Hence, we can write in a condensed way: 
\begin{equation} 
\widehat{\mathcal{A}}  = \Pi_{e_0}^\perp \widehat{\mathcal{A}_m}  \Pi_{e_0}^\perp \, , \quad \Pi_{e_0}^\perp = \rm{Id}  -   \Pi_{e_0}\,. 
\end{equation}
As regards the operator $\widehat{\mathcal{C}}$, we find 
\begin{equation} \label{estimChat_1}
 \| \widehat{\mathcal{C}} \  \widehat{b} \|_{L^2(\T^3)} \: \le \: C  \left( | \xi |^2  \| \widehat{b} \|_{L^2(\T^3)} +  | \xi |^3  \| \Pi_{e_0} \widehat{b} \|_{L^2(\T^3)} \right) 
 \end{equation} 
where the $O(|\xi|^3)$ comes from the term  $\widehat{\mathcal{A}_m} \Pi_{e_0}$ in the expression for  $\widehat{\mathcal{C}}$. 
We now state the following useful lemma.
\begin{lem}\label{taylorpie0}
Let~$b$ satisfy the Taylor constraint~{\rm(\ref{linear_constraint})}. Then
\begin{equation} \label{formulaPi0b}
\Pi_{e_0}\widehat  b = \frac{\beta_\xi}{\| \beta_\xi \|_{L^2(\T)}^2} \int_\T \left( i \, \widehat b_3  \, B' \: - \:  2i |\xi|^{-2}  \, \beta_\xi ' \, \widehat b_3\,  \left(\begin{smallmatrix}  \xi  \\  0  \end{smallmatrix} \right) \,  \right) \, dx_3 \, .
\end{equation}
In particular
$$
\big\| \Pi_{e_0}\widehat  b \big \|_{L^2} \le \frac{C}{|\xi|}  \|\widehat  b \|_{L^2} \, ,
$$
while 
\begin{equation} \label{AmPie0}
\widehat{\mathcal{A}_m} \Pi_{e_0}\widehat  b =  \widehat{\mathcal{A}_m} \left( \frac{\beta_\xi}{\| \beta_\xi \|_{L^2(\T)}^2} \int_\T  i \, \widehat b_3  \, B' 
dx_3\right)\,.
\end{equation}
\end{lem}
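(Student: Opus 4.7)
The plan is to turn each of the three constraints satisfied by $b$---namely the vertical and horizontal components of the linearized Taylor constraint \eqref{linear_constraint}, together with the divergence-free condition $\Div b = 0$---into an explicit identity for an integral of the form $\int_\T \beta_\xi \widehat{b}_j(x_3) \, dx_3$, $j = 1,2,3$. These three quantities are exactly what enter the componentwise definition of $\Pi_{e_0}\widehat b$, so once they are known, \eqref{formulaPi0b} follows by bookkeeping.

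First I would horizontally Fourier transform the quantity $L := \curl b \times B + \curl B \times b$. Since $L$ is the linearization around $B$ of $\curl A \times A = (A\cdot\na)A - \tfrac{1}{2}\na|A|^2$, one has $L = (B\cdot\na)b + (b\cdot\na)B - \na(B\cdot b)$, and the assumption $B = (B_{\rm h}(x_3),0)$ gives
\begin{equation*}
\widehat L_{\rm h} = i\beta_\xi \widehat b_{\rm h} + \widehat b_3 B'_{\rm h} - i(B_{\rm h}\cdot\widehat b_{\rm h})\xi, \qquad \widehat L_3 = i \beta_\xi \widehat b_3 - (B_{\rm h}\cdot\widehat b_{\rm h})'.
\end{equation*}
Integrating $\widehat L_3$ over $\T$, the derivative term vanishes by periodicity, so $\int_\T \widehat L_3 \, dx_3 = 0$ forces $\int_\T \beta_\xi \widehat b_3 \, dx_3 = 0$; in particular the third component of $\Pi_{e_0}\widehat b$ is zero. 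In mode $\xi \neq 0$, the two-dimensional Leray projector $\mathbb{P}_{\rm h}$ is the projection onto $\xi^\perp$, so the horizontal part of \eqref{linear_constraint} reduces to $\int_\T (\xi^\perp \cdot \widehat L_{\rm h})\, dx_3 = 0$; using $\xi\cdot\xi^\perp = 0$ this yields
\begin{equation*}
\int_\T \beta_\xi (\xi^\perp \cdot \widehat b_{\rm h}) \, dx_3 = i \int_\T \beta'_{\xi^\perp} \widehat b_3 \, dx_3.
\end{equation*}
A third identity comes from $\Div b = 0$, i.e.\ $\xi \cdot \widehat b_{\rm h} = i \pa_3 \widehat b_3$: multiplying by $\beta_\xi$ and integrating by parts yields $\int_\T \beta_\xi (\xi \cdot \widehat b_{\rm h}) \, dx_3 = -i \int_\T \beta'_\xi \widehat b_3 \, dx_3$.

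These two relations determine $\int_\T \beta_\xi \widehat b_{\rm h} \, dx_3$ via the orthogonal decomposition $\widehat b_{\rm h} = |\xi|^{-2}[(\xi \cdot \widehat b_{\rm h})\xi + (\xi^\perp \cdot \widehat b_{\rm h})\xi^\perp]$, and the identity $B'_{\rm h} = |\xi|^{-2}(\beta'_\xi \xi + \beta'_{\xi^\perp}\xi^\perp)$ lets one replace $\beta'_{\xi^\perp}\xi^\perp$ by $|\xi|^2 B'_{\rm h} - \beta'_\xi \xi$. Assembling everything and multiplying by $\beta_\xi / \|\beta_\xi\|^2_{L^2(\T)}$ yields the formula \eqref{formulaPi0b}. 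The $L^2$ bound follows from Cauchy--Schwarz applied to \eqref{formulaPi0b}, using the pointwise estimate $|\beta'_\xi| \le |\xi|\|B'_{\rm h}\|_{L^\infty}$ and the lower bound $\|\beta_\xi\|_{L^2(\T)} \ge c|\xi|$ coming from \eqref{nonconstant}. Finally, for \eqref{AmPie0} one observes that the second summand in \eqref{formulaPi0b} is a scalar multiple of $\beta_\xi \bigl(\begin{smallmatrix}\xi\\0\end{smallmatrix}\bigr)$; since $\widehat{\mathcal{A}_m}$ contains a cross product with $\bigl(\begin{smallmatrix}\xi\\0\end{smallmatrix}\bigr)$, this summand is annihilated and only the $i\widehat b_3 B'$ contribution survives. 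The main bookkeeping difficulty is the Fourier computation of $\widehat L$ and the careful use of the $\xi,\xi^\perp$ decomposition; once those are in hand, the remaining assertions follow by direct algebra.
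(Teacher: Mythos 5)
Your proof is correct and follows essentially the same route as the paper's: Fourier-transform the linearized Laplace force, use the vertical and horizontal projections of the Taylor constraint together with the divergence-free condition (integrated against $\beta_\xi$ and then by parts), and reassemble. The only cosmetic difference is that the paper encodes the horizontal constraint by introducing a pressure $\widehat p$ and then recovers it from the $\xi$-component, whereas you eliminate the pressure from the start by decomposing $\widehat b_{\rm h}$ on $\{\xi,\xi^\perp\}$; the underlying computation is identical.
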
   
\begin{proof}  We notice that
the Fourier transform of the Taylor constraint~(\ref{linear_constraint}) can be written
$$
 \int_\T\big(i  \beta_\xi \,   \widehat b  +B' \, \widehat b_3  \big) \, dx_3 = i \left(\begin{smallmatrix}  \xi  \\  0  \end{smallmatrix} \right) \, 
\widehat p \,.	$$
This implies on the one hand that
$$
\Pi_{e_0} \widehat b = \frac{    \beta_\xi}{ \|   \beta_\xi\|_{L^2(\T)}^2} \int_\T \big( i\widehat b_3 B'  + \left(\begin{smallmatrix}  \xi  \\  0  \end{smallmatrix} \right) \, \widehat p \big)\, dx_3
$$
and also
$$
\widehat p = \frac1{ |\xi|^2} \int_\T \big(\beta_\xi \,  \widehat b_{\rm h} \cdot \xi	 - i \widehat b_3 B'_{\rm h } \cdot \xi \big)\, dx_3 \, .
$$
The formula follows from the divergence free condition~$ \widehat b_{\rm h} \cdot \xi = i    \widehat b'_3$ and an integration by parts. 
\end{proof}
It follows from this lemma and  \eqref{estimChat_1} that 
\begin{equation} \label{estimChat}
 \| \widehat{\mathcal{C}} \  \widehat{b} \|_{L^2(\T^3)} \: \le \: C  \,  |\xi |^2  \| \widehat{b} \|_{L^2(\T^3)}\,.  
 \end{equation} 

 	\subsection{Reduction to large horizontal frequencies}\label{reductionlargehorizontal}
Let us write an~$L^2$ energy estimate on the equation satisfied by~$\widehat b$. From \eqref{estimRhat},\eqref{estimAhat} and \eqref{estimChat}, we deduce 
\begin{equation}\label{energyestimate}
\begin{aligned}
\frac12 \frac d {dt}\| \widehat{b}(t)\|_{L^2(\T)}^2  + |\xi|^2 \|\widehat{b}(t)\|_{L^2(\T)}^2  + \|\partial_3 \widehat{b}(t)\|_{L^2(\T)}^2  \leq C (1+|\xi|^2 )\|\widehat{b}(t)\|_{L^2(\T)}^2\, .
		  \end{aligned}
	\end{equation}
 	Gronwall's lemma gives therefore directly that
	$$
	\|\widehat{b}(t)\|_{L^2(\T)} \leq \| \widehat{b}(0) \|_{L^2(\T)} \, \exp\big (C (1+|\xi|^2) t\big)
	$$
	so from now on we may restrict our attention to the case when~$|\xi| \gg1$: we introduce
	$$\eps \:  :=  \: \frac{1}{|\xi|} \ll 1,   \quad \:   \eta \: :=  \: \eps \, \xi \, \in \mathbb{S}^1$$
and     express equation \eqref{induction_new} in terms of $\eps$ and $\eta$. Similarly to 
$\beta_\xi$ and $\beta'_{\xi^\perp}$, we define 
\begin{equation}  \label{defi_beta_eta}
\beta_\eta(x_3) \: := \: B_{\rm h}(x_3) \cdot \eta \, ,  \quad \beta'_{\eta^\perp}(x_3) \: := \:      B_{\rm h}'(x_3) \cdot \eta^\perp\,.  
\end{equation}
Note that  
\begin{equation}  \label{nonconstant2}
\delta:= \min_{\etah \in {\mathbb S}^1} \| \beta_\eta \|_{L^2(\T)}  \, > 0\,. 
\end{equation}
Instead of the operators 
$$ \widehat{\mathcal{A}} = \widehat{\mathcal{A}}(\xi,\pa_3)\,,  \quad  \widehat{\mathcal{C}} = \widehat{\mathcal{C}}(\xi,\pa_3),  \quad \widehat{\mathcal{R}} = \widehat{\mathcal{R}}(\xi,\pa_3)\,,$$
we introduce 
$$ A = A(\eta,\pa_3) \: := \:  \eps^3  \widehat{A}\left(\frac{\eta}{\eps}, \pa_3\right) \, , \quad C = C(\eta,\pa_3) \: := \: \eps^2 \,   
\widehat{C}\left(\frac{\eta}{\eps}, \pa_3\right)\,,  $$
and 
$$ R \: = \: \,   R(\eta,\pa_3) \: := \:   \eps^2 \widehat{R}\left(\frac{\eta}{\eps}, \pa_3\right)\,. $$
We also drop the $ \:\: \widehat{} \:\: $ on $\widehat{b}$, denoting $b$ instead. 
We have notably
\begin{equation} \label{defiA}
A \, b \: = \:   \Pi_{e_0}^\perp  \, A_m \,  \Pi_{e_0}^\perp  b\,,  
 \end{equation}
 and
\begin{equation} \label{defiC}
C \, b \:   :=  C_m  b \: + \: A_m  \left( \frac{\beta_\eta}{\| \beta_\eta \|_{L^2(\T)}^2} \int_\T  i \,  b_3  \, B'  \, dx_3 \right)   \:  - \:  \frac{i}{\| \beta_\eta \|_{L^2(\T)}^2} \int_\T \beta_\eta  \left( A_m \,  b \right)_3 \, dx_3 \, B'\end{equation}
where 
\begin{equation} \label{defAmCm}
A_m \,  b  \: := \: -i \beta_\eta \pa_3^{-1} \beta_\eta \left( \begin{smallmatrix} \eta \\ 0 \end{smallmatrix} \right) \times b, \quad C_m b  \: = \:  -\frac{1}{2} \beta_\eta \pa_3^{-1} \beta'_{\eta^\perp}  \,  b +  \frac{1}{2}  \beta'_{\eta^\perp} \pa_3^{-1} \beta_\eta \, \ b\,. 
\end{equation}
Note that the second term at the right-hand side of \eqref{defiC} comes from \eqref{AmPie0}. 
We get
\begin{equation} \label{induction3}
\pa_t b = \frac{A b}{\eps^3} + \frac{C b}{\eps^2} + \frac{Rb}{\eps^2} - \frac{1}{\eps^2} b + \pa_3^2 b  \, .
\end{equation}
Operators $A$ and  $C$  are independent of $\eps$. Moreover, $(A b |  b) = 0$, where $( \: | \: )$ is the usual scalar product over $L^2(\T ; \bC)$. 
 The remainder  $R$ is bounded uniformly in $\eps$, and   \eqref{estimRhat} implies that 
\begin{equation} \label{Rbb}
\frac{1}{\eps^2} \left| ( R b | b ) \right| \: \le \: \frac{C}{\eps} \| b \|_{L^2(\T^3)}^2
\end{equation}
for all smooth $b$ satisfying the Fourier version of the Taylor constraint
$$  \left( \begin{matrix}  \rm{Id} - |\xi|^{-2} \xi \otimes \xi & \\ & 1 \end{matrix}   \right)  \int_\T \left( \beta_\eta b + \eps b_3 B' \right) \, dx_3  = 0\,.$$
So, for $\eps$ small enough, it can be controlled by the term $-\frac{1}{\eps} \| b \|_{L^2(\T^3)}^2$  coming from the diffusion. 
The obstacle to estimate \eqref{theresult} is therefore the term $ \frac{1}{\eps^2}( C b | b )$.  

\subsection{General strategy} \label{sec_strategy}
To prove Theorem \ref{mainthm}, we shall resort to a normal form argument. In this section we present the method in a formal way. We denote generically by~$O(\eps^\alpha)$ an operator which may depend on on~$\eps$ and~$\eta$, but  is uniformly bounded by~$\eps^\alpha$ in operator norm over ~$L^2(\T)$.   
The idea is to change unknown by defining
		$$
		d:=({\rm{Id} }+ \eps Q) b
		$$
		with~$Q  = O(1)$. We then expect that  
		$$
		b = ({\rm{Id} }+ \eps Q) ^{-1}d = ({\rm{Id} }- \eps Q) d + O(\eps^2 )d 
		$$
				and
				$$
				\partial_t d = ({\rm{Id} }+ \eps Q)\partial_t b +O(\eps )d \, .
				$$
It follows that~$d$ should satisfy an equation of the type
				$$
\partial_t d -(\partial_3^2 - \frac1{\varepsilon^2})d  = \frac1{\varepsilon^3}  A d +  \frac1{\varepsilon^2} \big(   C  + [Q,A] + R\big)d  + O \Big(  \frac1{\varepsilon} \Big)d\, .				
				$$
The idea is to take  $Q$ as a solution of the homological equation 
$[A,Q]  =  C$. We refer to \cite{CGM,BGG,GSP} for applications of this strategy. 
Nevertheless, solving this equation is difficult in our case. To explain why, let us consider a simplified version where we neglect all terms coming from the geostrophic part $\curl(u_g \times b)$. This means we consider the equation 
$$ [A_m, Q]  = C_m $$
with $A_m$ and $C_m$ defined in \eqref{defAmCm}.

If the matrix $ \: \left( \begin{smallmatrix} \eta \\ 0 \end{smallmatrix} \right) \times$ were invertible, and if the function $\beta_\eta$ were not vanishing anywhere on $\T$, then a natural candidate for $Q$ would be: 
$$ Q  \: = \: -\frac{i}{2} \frac{\beta'_{\eta^\perp}}{\beta_\eta} \left(\left( \begin{smallmatrix} \eta \\ 0 \end{smallmatrix} \right) \times\right)^{-1} $$  
which  satisfies formally $[A_m, Q]  = C_m$.  

Unfortunately, trying to make this kind of construction rigorous, we   face several difficulties: 
\begin{enumerate}
\item The matrix  $\left( \begin{smallmatrix} \eta \\ 0 \end{smallmatrix} \right) \times$ is not invertible. 
\item The full expression of $A$ and $C$ involves additional terms, related to the geostrophic field $u_g$, notably the orthogonal projection $\Pi_{e_0}$. 
\item For any $x_3$, there is some $\eta \in \mathbb{S}^1$ such that $\beta_\eta(x_3) = 0$. From a different perspective, one may say that the multiplication by $\frac{1}{\beta_\eta}$ is not a bounded operator over $L^2(\T)$. 
\end{enumerate}
Hence, the normal form  cannot be  applied directly, and we   need additional arguments to overcome the issues just mentioned. 
 Roughly, the first difficulty, related to the kernel of~$\left( \begin{smallmatrix} \eta \\ 0 \end{smallmatrix} \right) \times$ will be handled thanks to the divergence-free constraint, which makes $b(t,x_3)$ almost orthogonal to this kernel (up to a power of $\eps$). The second one  will be handled taking advantage of  the Taylor constraint, notably through the identity of  Lemma \ref{taylorpie0}, which now reads 
 \begin{equation} \label{formulaPi0b_bis}
\Pi_{e_0} b = \eps \frac{\beta_\eta}{\| \beta_\eta \|_{L^2(\T)}^2} \int_\T \left( i \, \widehat b_3  \, B' \: - \:  2i   \, \beta_\eta ' \, b_3\,  \left(\begin{smallmatrix}  \eta  \\  0  \end{smallmatrix} \right) \,  \right) \, dx_3 \, .
\end{equation}

   Eventually, the last problem will be overcome by a spectral truncation. This means we shall only perform the construction of $Q$ in a finite-dimensional setting, projecting on the low modes of the skew-self-adjoint operator $A$. The high modes (that as we will show correspond to high frequencies in $x_3$), will be controlled, and discarded,  
 thanks to the presence of the operator~$\pa_3^{-1}$  in $C$. 

\section{Normal form argument}

\subsection{Using the divergence-free constraint}   \label{A2}
	Let $\Pi_\eta$ the orthogonal projection in $\bC^3$ over the vector $ ( \eta, 0 )^{\rm t}$, and $\Pi_\eta^\perp := \rm{Id} - \Pi_\eta$. 
	We shall use the divergence free constraint on $b$, which now reads 
\begin{equation} \label{divfreexih}
i \eta \cdot b_{\rm h} + \eps \pa_3 b_3 = 0 
\end{equation}
to show that we can somehow restrict to the control of $\Pi_\eta^\perp b$. More precisely we have the following result.
\begin{Prop} \label{estimateC2}
	For all $b,c$ in $L^2$, divergence free in the sense of \eqref{divfreexih}, we have 
	$$ \left| (C b | c ) -  (C \Pi_\eta^\perp b | \Pi_\eta^\perp c )\right|  \le C \,  \eps \| b  \|_{L^2(\T)} \,  \| c  \|_{L^2(\T)}  \, . $$
	\end{Prop}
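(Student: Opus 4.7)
The plan is to decompose $b = \Pi_\eta^\perp b + \Pi_\eta b$ and likewise for $c$, and to exploit the divergence-free constraint~(\ref{divfreexih}) which, since $|\eta|=1$, reads
$$\Pi_\eta v \; = \; i\eps\,(\partial_3 v_3)\,(\eta,0)^{\rm t}$$
for any divergence-free $v$. This already supplies the advertised factor $\eps$, but at the price of a derivative $\partial_3 v_3$ that must be absorbed by the $\partial_3^{-1}$'s appearing in every piece of $C$. Expanding the inner product gives
$$(Cb|c) - (C\Pi_\eta^\perp b|\Pi_\eta^\perp c) \; = \; (C\Pi_\eta b|c) + (C\Pi_\eta^\perp b|\Pi_\eta c).$$
Inspecting~(\ref{defiC}), the second summand of $C$ depends on $b$ only through $b_3$ and the third only through $(A_m b)_3$; since $(\Pi_\eta b)_3 = 0$ and $A_m \Pi_\eta b = 0$ (because $(\eta,0)^{\rm t}\times(\eta,0)^{\rm t}=0$), both of these summands annihilate $\Pi_\eta b$, and we are left with $C\Pi_\eta b = C_m \Pi_\eta b$.

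To handle the first error term, note that $C_m$ from (\ref{defAmCm}) acts componentwise as a scalar operator, so $C_m \Pi_\eta b = i\eps\,C_m(\partial_3 b_3)\,(\eta,0)^{\rm t}$. Inside each $\partial_3^{-1}$ of $C_m$ one integrates by parts using $\beta'_{\eta^\perp}\partial_3 b_3 = \partial_3(\beta'_{\eta^\perp}b_3) - \beta''_{\eta^\perp}b_3$ (and symmetrically for the other piece), then uses $\partial_3 \partial_3^{-1} = \mathcal{P}$; this rewrites $C_m(\partial_3 b_3)$ as a bounded $L^2(\T)$-operator applied to $b_3$, whose norm depends on $B$ only through finitely many derivatives. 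Hence $\|C_m\Pi_\eta b\|_{L^2}\le C\eps\|b\|_{L^2}$ and Cauchy--Schwarz closes this term. For the second error, the identity $\Pi_\eta c = i\eps(\partial_3 c_3)(\eta,0)^{\rm t}$ and one integration by parts in $x_3$ yield
$$(C\Pi_\eta^\perp b | \Pi_\eta c) \; = \; i\eps \int_\T \partial_3\bigl[\eta\cdot(C\Pi_\eta^\perp b)_{\rm h}\bigr]\,\overline{c_3}\,dx_3,$$
and one verifies $\|\partial_3(C\Pi_\eta^\perp b)\|_{L^2}\le C\|b\|_{L^2}$ piece by piece: each summand of $C\Pi_\eta^\perp b$ has the form $\beta_\eta \partial_3^{-1}(\cdots)$, $\beta'_{\eta^\perp} \partial_3^{-1}(\cdots)$, $A_m g$ with $g$ smooth and $L^2$-controlled by $\|b\|_{L^2}$, or a scalar (of size $\|b\|_{L^2}$) times $B'$, and differentiation is harmless thanks to $\partial_3\partial_3^{-1} = \mathcal{P}$ and the smoothness of $B$.

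The main obstacle is conceptual rather than computational: a naive application of the divergence-free identity only gives $\|\Pi_\eta b\|_{L^2}\le \eps \|\partial_3 b_3\|_{L^2}$, which is useless at the $L^2$ level since the right-hand side is not controlled by $\|b\|_{L^2}$. What rescues the argument is the structural presence of $\partial_3^{-1}$ in every piece of $C$ (inherited from $A_m$, $C_m$ and $\Delta_B^{-1}$): after one integration by parts this operator exactly cancels the surplus derivative, the cost being only a harmless $\beta_\eta''$ or $B''$ coefficient multiplying the smooth background field.
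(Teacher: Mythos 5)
Your proof is correct and follows essentially the same route as the paper: decompose the error as $(C\Pi_\eta b|c)+(C\Pi_\eta^\perp b|\Pi_\eta c)$, observe that the two geostrophic summands of $C$ annihilate $\Pi_\eta b$ (since $(\Pi_\eta b)_3 = 0$ and $A_m\Pi_\eta b = 0$), trade the extra $\partial_3$ from the divergence-free identity for a bounded operator via an integration by parts inside $\partial_3^{-1}$, and move the remaining $\partial_3$ off $\Pi_\eta c$ by integrating by parts. The paper's inequality \eqref{bounded} is precisely the boundedness you establish, so the two arguments match step for step.
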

	\begin{proof}
	Let us start by proving that
\begin{equation}\label{C2Pi+small}
 \|C \Pi_\eta  b\|_{L^2(\T)}  \leq C \eps  \| b  \|_{L^2(\T)}  \, . 
 \end{equation}
	From \eqref{divfreexih}, 
	$$
	\Pi_\eta b   = -(\eps \pa_3 (\Pi_\eta^\perp b)_3) \left( \begin{matrix} \eta \\ 0 \end{matrix} \right)\, .	$$
From \eqref{defiC}, we get  
\begin{equation} 
\begin{aligned}
C  \Pi_\eta b   \:   & :=  \: - \frac{1}{2} \beta_\eta \pa_3^{-1} \beta'_{\eta^\perp}  \,  \Pi_\eta b +  \frac{1}{2}  \beta'_{\eta^\perp} \pa_3^{-1} \beta_\eta \, \  \Pi_\eta b \\
& =  \eps \left( \frac{1}{2} \beta_\eta \pa_3^{-1} \left( \beta'_{\eta^\perp}  \, \pa_3 (\Pi_\eta^\perp b)_3 \right) -  \frac{1}{2}  \beta'_{\eta^\perp} \pa_3^{-1} \left( \beta_\eta  \, \pa_3 (\Pi_\eta^\perp b)_3 \right) \right) \left( \begin{matrix} \eta \\ 0 \end{matrix} \right)
\end{aligned}
\end{equation}
We notice that 
$$ \pa_3^{-1} (\beta_\eta \pa_3 \Pi_\eta^\perp b) = \beta_\eta \Pi_\eta^\perp b   - \int_\T (\beta_\eta \Pi_\eta^\perp b) -  \pa_3^{-1} \left(  \beta'_\eta \Pi_\eta^\perp b \right)  $$
so that
\begin{equation} \label{bounded}
 \|  \pa_3^{-1} (\beta_\eta \pa_3 \Pi_\eta^\perp b) \|_{L^2(\T)} \le C \| b \|_{L^2(\T)}  
 \end{equation}
and the same with $\beta'_{\eta^\perp}$ instead of $\beta_\eta$. Inequality~(\ref{C2Pi+small}) follows.
	Then to end the proof, writing
	$$
	 (C b | c ) =  (C \Pi_\eta b |  c ) + (C \Pi_\eta^\perp b | \Pi_\eta c )+ (C \Pi_\eta^\perp b | \Pi_\eta^\perp c ) \, , 
	$$
	it suffices to prove that
	$$
\big |	 (C \Pi_\eta^\perp b | \Pi_\eta c )\big |  \leq C \eps  \| b  \|_{L^2(\T)}\| c  \|_{L^2(\T)} \, .
	$$
We have 
	$$
	\begin{aligned}
 (C \Pi_\eta^\perp b | \Pi_\eta c ) &=  - \eps \left( C \Pi_\eta^\perp b \:  | \: \pa_3 (\Pi_\eta^\perp c)_3 \left( \begin{smallmatrix} \eta \\ 0 \end{smallmatrix} \right) \right)\\
 & = \eps \left( \pa_3 C \Pi_\eta^\perp b  \: | \: (\Pi_\eta^\perp c)_3 \left( \begin{smallmatrix} \eta \\ 0 \end{smallmatrix} \right) \right) \, .
 \end{aligned}
 $$
Using again \eqref{bounded}, it is easily seen that  $\| \pa_3 C \Pi_\eta^\perp b \|_{L^2(\T)} \le C \| \Pi_\eta^\perp b \|_{L^2(\T)}$. The result follows from the Cauchy-Schwarz inequality. 
	\end{proof}
 	\subsection{Spectral analysis of $A$}\label{spectralanalysis}
		In this paragraph we diagonalize the operator~$A$, 	recalling
			$$
		A b := i  \Pi_{e_0}^\perp 	 \beta_\eta \partial_3^{-1} 
	 \beta_\eta \Pi_{e_0}^\perp b  \times  	\left(\begin{array}{c}
		\eta \\ 0
		\end{array}\right)    \,  .
	 $$
	 		The following result holds. 
\begin{Prop}\label{spectraldec}
		Given $\eta \in {\mathbb S}^1$,  recall  
\begin{equation} \label{defe0}
e_0 \:  :=  \: \beta_\eta / \| \beta_\eta \|_{L^2(\T)} 
\end{equation}
and for all $k \in \Z^*$ define
\begin{equation} \label{defeklambdak} 
\begin{aligned} 
\mu_{k} \: & :=  \:  \frac{ \|\beta_\eta \|_{L^2(\T)}^2 }{2ik\pi} \\
e_k \: & := \: \beta_{\eta,k} /\|  \beta_{\eta,k} \|_{L^2(\T)} \, ,   \quad \mbox{with} \quad	 \beta_{\eta,k}(x_3):= \beta_\eta( x_3)  \exp \Big(\frac1{ \mu_k} \int_0^{x_3} \beta_\eta^2 ( y_3) \, dy_3\Big) \,.  \\
\end{aligned}
\end{equation}
Then, the set~$( \Phi_{k}^\pm)_{k \in \Z}$  defined  by 		
		$$
		\begin{aligned}
		 \Phi_{k}^-:=		\left(\begin{array}{c}
		\eta \\ 0
		\end{array}\right)   e_k \, ,  \quad
 \Phi_{2k}^+ :=\frac1{\sqrt2}\left(\begin{array}{c}
		\eta ^\perp\\ i
		\end{array}\right)  e_k \,  , \quad 	 \Phi_{2k+1}^+ :=\frac1{\sqrt2}\left(\begin{array}{c}
		\eta ^\perp\\- i
		\end{array}\right) e_{k}
		\end{aligned}
		$$
is an orthonormal basis of~$L^2(\T)^3$ satisfying  
$$ A \Phi_k^-  =  A \Phi^+_0 =  A \Phi^+_1 = 0 \, , \quad A \Phi^+_{2k} = \mu_k \Phi^+_{2k} \, , \quad  A \Phi^+_{2k+1} = -\mu_k \Phi^+_{2k+1}, \quad k \in \Z^* \, .$$
		\end{Prop}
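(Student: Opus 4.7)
The plan is to exploit a tensor-product structure. Since $\Pi_{e_0}^\perp$ acts component-wise on $\bC^3$-valued functions and the cross product with $(\eta,0)^t$ commutes with multiplication by scalar functions of $x_3$, one has for every constant $v \in \bC^3$ and every $f \in L^2(\T)$,
\begin{equation*}
A(v f) \: = \: i\,(Tf)\bigl(v \times (\eta,0)^t\bigr), \qquad Tf \: := \: \Pi_{e_0}^\perp \bigl(\beta_\eta\, \pa_3^{-1}(\beta_\eta\, \Pi_{e_0}^\perp f)\bigr).
\end{equation*}
It therefore suffices to diagonalize separately the matrix $V \mapsto V \times (\eta,0)^t$ on $\bC^3$ and the scalar operator $T$ on $L^2(\T)$, and then to assemble the eigenvectors as tensor products.

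For the finite-dimensional part, a direct computation shows that $(\eta,0)^t$ spans the kernel of the cross product while $v_\pm := (\eta^\perp, \pm i)^t$ are eigenvectors with eigenvalues $\mp i$; the triple $\{(\eta,0)^t, v_+/\sqrt 2, v_-/\sqrt 2\}$ is orthonormal in $\bC^3$ for the Hermitian inner product. This immediately accounts for the null vectors listed in the proposition: each $\Phi_k^-$ is parallel to $(\eta,0)^t$, hence $A\Phi_k^- = 0$ for every $k \in \Z$, and $\Phi_0^+, \Phi_1^+$ have scalar part $e_0$, which is annihilated by $\Pi_{e_0}^\perp$, hence $A\Phi_0^+ = A\Phi_1^+ = 0$.

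For the scalar part I would first observe that $T$ is compact (because $\pa_3^{-1}$ is smoothing on mean-zero functions) and skew-self-adjoint (because $\beta_\eta$ is real and $\pa_3^{-1}$ is anti-Hermitian), so that its nonzero spectrum consists of isolated purely imaginary eigenvalues. I would then solve $Tf = \mu f$ with $\mu \neq 0$ via a formal ODE analysis: writing $\pa_3^{-1}(\beta_\eta f) = \mu f/\beta_\eta + \text{const}$ and differentiating yields $(f/\beta_\eta)' = \beta_\eta^2 (f/\beta_\eta)/\mu$, whose periodic solutions exist exactly when $\exp(L/\mu) = 1$ with $L := \|\beta_\eta\|_{L^2(\T)}^2$, i.e.\ when $\mu = \mu_k$ for some $k \in \Z^*$, with normalized eigenvector $e_k$. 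A rigorous verification $Te_k = \mu_k e_k$ proceeds by direct substitution, the key algebraic identity being $\beta_\eta^2 \exp(g/\mu_k) = \mu_k (\exp(g/\mu_k))'$ with $g(x_3) := \int_0^{x_3} \beta_\eta^2\,dy_3$, together with the endpoint cancellation $\exp(L/\mu_k) = 1$ which ensures that $\beta_\eta e_k$ has zero mean on $\T$ so that $\pa_3^{-1}$ is defined on it. Orthogonality of the $e_k$ among themselves and with $e_0$ follows from the distinctness of the purely imaginary eigenvalues $\mu_k$ and from a short endpoint computation using once more $\exp(L/\mu_k) = 1$.

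Combining the two diagonalizations and using the eigenvalue relations for $v_\pm$ gives $A(v_\pm e_k) = \mp \mu_k\,(v_\pm e_k)$ for all $k \in \Z^*$, matching the statement up to the sign convention relating $v_\pm$ to the indices $2k$ and $2k+1$. Orthonormality of the full family $\{\Phi_k^\pm\}_{k \in \Z}$ is then a direct consequence of the orthonormality of the two factors in the tensor product. The main obstacle is completeness of $\{e_k\}_{k \in \Z}$ in $L^2(\T)$, which via the spectral theorem for compact skew-self-adjoint operators reduces to verifying $\ker T \cap \{e_0\}^\perp = \{0\}$: if $Tf = 0$ and $f \perp e_0$, then $\beta_\eta \pa_3^{-1}(\beta_\eta f) = c\,\beta_\eta$ for some constant, so $\pa_3^{-1}(\beta_\eta f)$ is constant on the set where $\beta_\eta \neq 0$, hence constant on $\T$ by continuity; this constant must be zero because $\pa_3^{-1}$ produces mean-zero functions, which forces $\beta_\eta f = 0$ and then $f = 0$ a.e.\ provided $\beta_\eta$ does not vanish on a set of positive measure. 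Completeness of $\{\Phi_k^\pm\}_{k \in \Z}$ in $L^2(\T)^3$ then follows by tensoring with the basis $\{(\eta,0)^t, v_+/\sqrt 2, v_-/\sqrt 2\}$ of $\bC^3$.
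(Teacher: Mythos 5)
Your proof follows essentially the same route as the paper's: diagonalize the constant cross-product matrix $v\mapsto v\times(\eta,0)^{\rm t}$ on $\bC^3$, diagonalize the scalar compact skew-self-adjoint operator $T = \Pi_{e_0}^\perp\beta_\eta\pa_3^{-1}\beta_\eta\Pi_{e_0}^\perp$ (called $D$ in the paper) by reducing $Tf=\mu f$ to an ODE and imposing periodicity, and assemble by tensor products. The ODE reduction, the quantization condition $\exp\bigl(\|\beta_\eta\|_{L^2(\T)}^2/\mu\bigr)=1$, and the identification of $\mu_k$ and $e_k$ all coincide; the sign-convention ambiguity you flag (which of $\Phi_{2k}^+$, $\Phi_{2k+1}^+$ carries $+\mu_k$) is indeed immaterial.

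Where you go beyond the paper is the completeness step, and this is worth dwelling on. The paper merely asserts that ``it follows'' that $(e_k)_{k\in\Z^*}$ is an orthonormal basis of $\Pi_{e_0}^\perp L^2(\T)$; you correctly reduce completeness, via the spectral theorem for compact skew-self-adjoint operators, to $\ker T\cap\{e_0\}^\perp=\{0\}$, and supply an argument. However, as you observe yourself, that argument requires $\beta_\eta$ to vanish only on a null set, and this does \emph{not} follow from the paper's standing assumption \eqref{nonconstant}, which merely forces $\beta_\eta\not\equiv 0$. A smooth $B_{\rm h}$ compactly supported in a proper subinterval of $\T$ (with the two components linearly independent there) satisfies \eqref{nonconstant} yet makes $\beta_\eta$ vanish on a set of positive measure for every $\eta$; then $\ker T\cap\{e_0\}^\perp$ contains all $f$ supported in $\{\beta_\eta=0\}$ with $f\perp e_0$, so $(\Phi_k^\pm)_{k\in\Z}$ fails to span $L^2(\T)^3$ and the proposition is not literally correct. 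The paper's proof makes the same tacit assumption, so this is a shared lacuna rather than a defect of your argument in particular; still, it deserves to be made explicit (for instance by strengthening \eqref{nonconstant} to $\beta_\eta\ne 0$ a.e.\ for each $\eta\in{\mathbb S}^1$, which holds automatically when $B$ is real analytic), since the later Lemma~\ref{highmodeslemma} expands $\Pi_N^\sharp b$ in the basis $(\Phi_k^+)$ and would need modification if completeness failed.
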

		\begin{proof}  
		We start by considering the compact operator
		$$
		D:=\Pi_{e_0}^\perp  \beta_\eta \,  \partial_3^{-1} 
	\beta_\eta  \Pi_{e_0}^\perp 
		$$
		which  clearly sends $e_0$ to $0$ and $\Pi_{e_0}^\perp  L^2(\T)$ to itself. Let~$\mu \neq 0$ be an eigenvalue of~$D$ in~$\Pi_{e_0}^\perp L^2(\T)$, and~$f$ an associate eigenfunction. Then since~$\Pi_{e_0}^\perp  f = f$ this means that~$f$ must solve, in~$\Pi_{e_0}^\perp L^2(\T)$, the equation
\begin{equation}\label{eq1}  
 \beta_\eta \,  \partial_3^{-1} 
	\beta_\eta f = \mu f + \alpha \beta_\eta \, , \quad \mbox{for some} \quad \alpha \in {\mathbb C} \, .
\end{equation}
Moreover~$\Pi_{e_0}^\perp  f = f$ means that~$\beta_\eta f$ has zero average with respect to $x_3$, so that $u:= \partial_3^{-1} (\beta_\eta f)$ satisfies $\pa_3 u  = \beta_\eta f$. Hence, $u$ must satisfy 
		$$
	 \beta_\eta u = \frac\mu  {\beta_\eta} \partial_3 u	+ \alpha \beta_\eta	$$
	 hence
	 $$
	 u(x_3)=  \exp \Big(\frac1 \mu \int_0^{x_3} \beta_\eta^2 (y_3) \, dy_3\Big)
	 - \int_\T\exp \Big(\frac1 \mu \int_0^{x_3} \beta_\eta^2 (y_3) \, dy_3\Big) \, dx_3
	  \, ,
	 $$
	 and~$\alpha = \displaystyle  - \int_\T \exp \Big(\frac1 \mu \int_0^{x_3} \beta_\eta^2 (y_3) \, dy_3\Big) \, dx_3$.
	But~$u (0) = u(1)$
	   therefore
	   $$
	    \exp \Big(\frac1 \mu \int_\T \beta_\eta^2 (y_3) \, dy_3\Big) = 1
	   $$
	   which implies that~$\mu = \mu_k$ where for~$k \in  \Z^* ,$
	 $$
\mu_k:= \frac1{2ik\pi} \int_\T \beta_\eta^2 (x_3) \, dx_3 \, .
	 $$
Finally we have
	 $$
	 f(x_3) =\frac{ 2ik\pi}  {\|\beta_\eta \|_{L^2}^2} \beta_\eta(x_3) \exp \Big(\frac{ 2ik\pi}  {\|\beta_\eta\|_{L^2}^2} \int_0^{x_3} \beta_\eta^2 (y_3) \, dy_3\Big)
	 \, .
	 $$
	  It follows that the family~$(e_k)_{k \in \Z^*}$ defined in (\ref{defeklambdak}) is an orthonormal basis of~$\Pi_{e_0}^\perp L^2 (\T)$, while $(e_k)_{k \in \Z}$ is an orthonormal basis of $L^2(\T)$.  	  
	  Finally to recover an orthonormal basis of eigenfunctions of~$A$ in~$(L^2(\T))^3$, we   use the fact that
	 $$
	 \left(\begin{array}{c}
	\eta^\perp \\\pm i
		\end{array}\right) \, , 	 \left(\begin{array}{c}
	 \eta  \\0
		\end{array}\right) 
	 $$
	 is a basis of eigenvectors of the operator~$i \cdot \times\left(\begin{array}{c}
	 \eta  \\0
		\end{array}\right)     $, where the two first are associated with the eigenvalues~$\pm 1$ and the third vector in its kernel. The result follows directly. 
	 \end{proof}
\noindent	 Recalling that~$\Pi_\eta$ is the projector onto~$\left(\begin{array}{c}
	 \eta  \\0
		\end{array}\right)  $ remark that
$$
\Pi_\eta  b = \sum_{k \in \Z} (b | \Phi_k^-) \Phi_k^- \quad \mbox{and} \quad \Pi_\eta^\perp b  = \sum_{k \in \Z} (b | \Phi_k^+) \Phi_k^+ \, .
$$

		 	 	 %%%%%%%%%%%%%%%%%%%%%%%%%%%%%%%
		\subsection{Reduction to a finite dimensional setting}
In order to build up our normal form, we need to reduce to a finite dimensional setting, {\it cf} the discussion in paragraph \ref{sec_strategy}. Roughly, Proposition \ref{estimateC2} will help us to get rid of the infinite dimensional subspace 
$$\text{vect}(\{ \Phi_k^{-}, \quad k \in \Z \})  \subset \ker A \, .$$ 
 The point is then  to be able to restrict to $\text{vect}(\{ \Phi^{k}_+, \: |k| \le N)\}$, for some (possibly large) $N$. We first define spectral projectors on low and high modes of~$A$:
	$$
	\Pi_N^\flat b := \sum_{k= -2N}^{2N+1} (b |  \Phi_k^+)  =  \sum_{|k| \le N} \left( (b|\Phi_{2k}^+) \Phi_{2k}^+ + (b|\Phi_{2k+1}^+) \Phi_{2k+1}^+ \right) \, , \quad \Pi_N^\sharp:= \Pi_\eta^\perp- \Pi_N^\flat \, .
	$$
Let us collect some properties of these spectral projectors.
	\begin{lem}\label{lowmodeslemma}
For any  divergence free vector field in the sense of~{\rm(\ref{divfreexih})}~$b$ in~$L^2(\T)^3$, the following holds for all integers~$n$, under Assumption~{\rm(\ref{nonconstant2})}:
$$
\|\partial_{3}^n \Pi_N^\flat b \|_{L^2(\T)} \lesssim \frac{N^{n+\frac{ 1}2} }{\delta } \|\Pi_N^\flat b \|_{L^2(\T)}  \, , \quad  \|\Pi_N^\flat \partial_3^n \Pi_N^\sharp b \|_{L^2(\T)} \lesssim  \frac{N^{n+\frac{ 1}2} }{\delta }\| \Pi_N^\sharp b\|_{L^2(\T)}  \, ,$$ 
and 
$$  \| \Pi_N^\flat \pa_3^n b \|_{L^2(\T)} \lesssim    \frac{N^{n+\frac{ 1}2} }{\delta }  \|b\|_{L^2(\T)}  \, .
$$
\end{lem}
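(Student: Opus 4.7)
The plan is to expand $b$ in the orthonormal basis $(\Phi_k^\pm)_{k\in\Z}$ constructed in Proposition~\ref{spectraldec} and to reduce all three estimates to a single pointwise-in-$k$ control of $\|\partial_3^n e_k\|_{L^2(\T)}$ for $|k|\le N$.

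The key estimate I would prove first is
\[
\|\partial_3^n e_k\|_{L^2(\T)} \;\lesssim\; \frac{|k|^n}{\delta^{2n+1}}
\qquad (k \in \Z^*),
\]
with an implicit constant depending only on Sobolev norms of $B$. The argument uses the explicit form $e_k = \beta_\eta u_k/\|\beta_\eta\|_{L^2(\T)}$, where $u_k(x_3) := \exp\bigl(\mu_k^{-1}\int_0^{x_3}\beta_\eta^2(y_3)\,dy_3\bigr)$. Since $\mu_k$ is purely imaginary and $\beta_\eta$ is real-valued, one has $|u_k|\equiv 1$; in particular $\|\beta_{\eta,k}\|_{L^2(\T)} = \|\beta_\eta\|_{L^2(\T)} \ge \delta$ by \eqref{nonconstant2}. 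The identity $\partial_3 u_k = (\beta_\eta^2/\mu_k)\, u_k$ and Leibniz's rule show that $\partial_3^n e_k = Q_n(x_3;\,1/\mu_k)\,u_k/\|\beta_\eta\|_{L^2(\T)}$, where $Q_n$ is a polynomial of degree $n$ in $1/\mu_k$ whose coefficients are smooth functions of $B$ and its derivatives. Combined with $|1/\mu_k| = 2\pi|k|/\|\beta_\eta\|_{L^2(\T)}^2 \lesssim |k|/\delta^2$ and $|u_k|=1$, the displayed bound follows.

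Once this estimate is available, the first inequality of the lemma is obtained as follows. Write $\Pi_N^\flat b = \sum_{|k|\le N}\bigl(c_k\Phi_{2k}^+ + c_k'\Phi_{2k+1}^+\bigr)$ with $\sum_{|k|\le N}(|c_k|^2+|c_k'|^2) = \|\Pi_N^\flat b\|_{L^2(\T)}^2$. Since the vector parts of $\Phi_{2k}^\pm$ are constant unit vectors in $\bC^3$, the triangle inequality followed by Cauchy--Schwarz over $k$ yields
\[
\|\partial_3^n \Pi_N^\flat b\|_{L^2(\T)}
\;\le\; \|\Pi_N^\flat b\|_{L^2(\T)} \Bigl(2\!\!\sum_{|k|\le N}\|\partial_3^n e_k\|_{L^2(\T)}^2\Bigr)^{1/2}
\;\lesssim\; \frac{N^{n+1/2}}{\delta}\,\|\Pi_N^\flat b\|_{L^2(\T)},
\]
the $N^{n+1/2}$ arising from $\sum_{|k|\le N}|k|^{2n} \lesssim N^{2n+1}$. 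The third inequality uses Parseval together with integration by parts: $(\partial_3^n b\,|\,\Phi_{2k}^\pm) = (-1)^n (b\,|\,\partial_3^n\Phi_{2k}^\pm)$, so
\[
\|\Pi_N^\flat \partial_3^n b\|_{L^2(\T)}^2
= \sum_{|k|\le N}\!\bigl(|(b\,|\,\partial_3^n\Phi_{2k}^+)|^2 + |(b\,|\,\partial_3^n\Phi_{2k+1}^+)|^2\bigr)
\;\le\; 2\,\|b\|_{L^2(\T)}^2\!\!\sum_{|k|\le N}\!\|\partial_3^n e_k\|_{L^2(\T)}^2,
\]
and the same summation produces the factor $N^{2n+1}$, hence the claimed bound. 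The second inequality is then a one-line consequence of the third applied to $\tilde b := \Pi_N^\sharp b$, since $\Pi_N^\flat\partial_3^n\Pi_N^\sharp b = \Pi_N^\flat\partial_3^n\tilde b$ and $\|\tilde b\|_{L^2(\T)} = \|\Pi_N^\sharp b\|_{L^2(\T)}$.

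No step is expected to be delicate. The entire argument reduces to the explicit formula for $e_k$, where the unimodularity $|u_k|\equiv 1$ and the fact that $1/\mu_k$ is purely imaginary and linear in $k$ make the dependence of $\|\partial_3^n e_k\|_{L^2(\T)}$ on $|k|$ transparent; all that remains is a power-sum estimate. Incidentally, the divergence-free hypothesis \eqref{divfreexih} plays no role in these three bounds and is included only because this is the class of fields to which the lemma will subsequently be applied.
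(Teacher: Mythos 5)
Your proof is correct, and for the first inequality you follow essentially the same route as the paper: the key point is the explicit bound on $\|\partial_3^n e_k\|_{L^2(\T)}$ obtained from $e_k = \beta_\eta u_k/\|\beta_\eta\|_{L^2(\T)}$ with $|u_k|\equiv 1$, followed by Cauchy--Schwarz over $|k|\le N$. Where you genuinely differ is the order of the second and third inequalities. You prove the third directly, via the integration by parts $(\partial_3^n b \,|\, \Phi_k^+) = (-1)^n (b \,|\, \partial_3^n \Phi_k^+)$ and Cauchy--Schwarz, and then obtain the second as an immediate corollary by substituting $\Pi_N^\sharp b$ for $b$. The paper proceeds in the opposite direction: it proves the second inequality first, through the more laborious expansion
$$
\Pi_N^\flat \partial_3 \Pi_N^\sharp b = -\sum_{|j|\le N}\sum_{|k|>N} (b\,|\,\Phi_k^+)\,(e_k\,|\,\partial_3 e_j)\,\Phi_j^+
$$
together with the explicit formula $(e_k\,|\,\partial_3 e_j) = \|\beta_\eta\|_{L^2(\T)}^{-1}\,(\beta_\eta' + \mu_j^{-1}\beta_\eta^3\,|\,e_{j-k})$ and an $\ell^2$ estimate in $j$ --- and then derives the third from the first two via the triangle inequality on $\Pi_N^\flat\partial_3^n b = \Pi_N^\flat\partial_3^n\Pi_N^\flat b + \Pi_N^\flat\partial_3^n\Pi_N^\sharp b$. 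Your route is shorter and avoids the cross-term formula entirely; it also makes transparent, as you remark, that the divergence-free constraint plays no role in any of the three estimates (equally true in the paper's proof, just less visible). The only small discrepancy is the $\delta$-power: your careful bookkeeping yields $\|\partial_3^n e_k\|_{L^2(\T)}\lesssim |k|^n/\delta^{2n+1}$ and hence a final factor $N^{n+1/2}/\delta^{2n+1}$ rather than $N^{n+1/2}/\delta$; but the paper has the same feature (its bound $1/|\mu_k|\lesssim N/\delta$ should really be $N/\delta^2$), and since $\delta$ is a fixed constant attached to $B$ in all downstream uses, this is a harmless imprecision shared with the original, not a gap.
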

\begin{proof}  The third relation follows easily from the first two and from the inequality 
\begin{align*}
\| \Pi_N^\flat \pa_3^n b \|_{L^2(\T)}& \le \| \Pi_N^\flat  \pa_3^n \Pi_N^\flat b \|_{L^2(\T)} + \|Â \Pi_N^\flat \pa_3^n \Pi_N^\sharp b \|_{L^2(\T)} \\
& \le \|Â \pa_3^n \Pi_N^\flat b \|_{L^2(\T)} + \| \Pi_N^\flat \pa_3^n \Pi_N^\sharp b \|_{L^2(\T)} \, .
\end{align*}
To fix ideas, we assume that $N$ is odd.  To prove the first inequality, we write  
\begin{align*}
 \pa_3 \Pi_N^\flat b & =  \sum_{k=-2N}^{2N+1} (b | \Phi_k^+) \pa_3 \Phi_k^+ \\
  &= \sum_{|k| \le N}  \Bigl(  (b | \Phi_{2k}^+) \pa_3 \Phi_{2k}^+ \: +  \:  (b | \Phi_{2k+1}^+) \pa_3 \Phi_{2k+1}^+ \Bigr) \, .
 \end{align*}
 	Recalling~(\ref{defeklambdak}) we know that  
		\begin{equation}\label{computed3ek}
\partial_3 e_k(x_3)  =\frac{1}{\| \beta_\eta\|_{L^2(\T)}} \big( 	\beta_\eta'(x_3)+ 	\frac1{\mu_k}\beta_\eta^3(x_3)	  \big)
\exp \Big(
		\frac1{\mu_k} \int_0^{x_3} \beta_\eta^2 (y_3) \, dy_3
 		\Big)
		\end{equation}
		and thanks to~{\rm(\ref{nonconstant2})} we have
		$$
	\forall |k| \leq N \, , \quad 		\frac1{|\mu_k|} \lesssim \frac N {\delta  }\,\cdotp
		$$
		It follows that
		$$
			\forall |k| \leq N \, , \quad 	\| 	\partial_3 e_k \|_{L^2(\T)}  \lesssim \frac N {\delta  }
		$$
		so by the Cauchy-Schwarz inequality
		$$
		\|\partial_{3}  \Pi_N^\flat b \|_{L^2(\T)} \lesssim \frac  {N^{\frac{3}2}} {\delta }  \|\Pi_N^\flat b\|_{L^2(\T)} \, .
		$$
		The argument is identical for higher derivatives.
		
		\medskip
		\noindent
		For the second inequality, we write
		 \begin{equation}\label{formulapind3}
		\begin{aligned}
		 \Pi_N^\flat \partial_3 \Pi_N^\sharp b &=  \sum_{j=-2N}^{2N+1}\sum_{k \not\in [-2N,2N+1]}
		  (b | \Phi_k^{+} ) (\partial_3 \Phi_k^{+} |\Phi_j^{+} )\Phi_j ^{+} \\
		 &  = - \sum_{j=-2N}^{2N+1}\sum_{k \not\in [-2N,2N+1]}\  (b |\Phi_k^{+}) ( \Phi_k ^{+}| \pa_3 \Phi_j^{+})\Phi_j^{+} \, \\
		 & = - \sum_{|j| \le N} \sum_{|k| > N} \Bigl( (b |\Phi_{2k}^{+})  (e_k| \pa_3 e_j) \Phi_{2j}^{+}  + (b |\Phi_{2k+1}^{+})  (e_k| \pa_3 e_j) \Phi_{2j+1}^{+} \Bigr) \, ,
		\end{aligned}
		 \end{equation}
		 noticing that by construction~$( \Phi_{2k}^{+} | \Phi_{2kj+1}^{+}) = 0$.

		\noindent  By the definitions \eqref{defe0} and \eqref{defeklambdak}, setting $\mu_0 = +\infty$ (that is $\frac{1}{\mu_0} = 0$), we get
		\begin{equation}\label{ekej}
		\begin{aligned}
 	(e_k |\partial_3e_j) & = \frac{1}{\|  \beta_\eta\|_{L^2(\T)}^2}
		\int_{\T} \beta_\eta(x_3)\big( 	\beta_\eta'(x_3)+ 	\frac1{\mu_j}\beta_\eta^3(x_3)	  \big) \\
		& \qquad \times
\exp \Big(
		\big(\frac1{\mu_k} -\frac1{\mu_j} \big)\int_0^{x_3} \beta_\eta^2 (y_3) \, dy_3
 		 \Big) dx_3 \nonumber \\
\nonumber		& = \frac{1}{\|  \beta_\eta\|_{L^2(\T)}^2}
		\int_{\T} \beta_\eta(x_3)\big( 	\beta_\eta'(x_3)+ 	\frac1{\mu_j}\beta_\eta^3(x_3)	  \big) \\
		& \qquad \times
\exp \Big(2i\pi(k-j)  \frac{1}{\|  \beta_\eta\|_{L^2(\T)}^2}
\int_0^{x_3} \beta_\eta^2 (y_3) \, dy_3 
 		\Big)  dx_3\nonumber \\
		&   = \frac{1}{\|  \beta_\eta\|_{L^2(\T)}} ( \beta_\eta' + \frac{1}{\mu_j} \beta_\eta^3 \, | \,  e_{j-k} ) \,.
		\end{aligned}
		\end{equation}	 
		 Hence
		 \begin{align*}
\|  \Pi_N ^\flat \partial_3 \Pi_N^\sharp b \|_{L^2(\T)}^2 & \le \frac{1}{\|  \beta_\eta\|_{L^2(\T)}^2}  \sum_{|j| \le N}  \Bigl| \sum_{|k| > N}  (b |\Phi_{2k}^{+})  ( \beta_\eta' + \frac{1}{\mu_j} \beta_\eta^3 \, | \,  e_{j-k} ) \Bigr|^2 \\ 
& +  \frac{1}{\|  \beta_\eta\|_{L^2(\T)}^2}  \sum_{|j| \le N}   \Bigl| \sum_{|k| > N}  (b |\Phi_{2k+1}^{+})  ( \beta_\eta' + \frac{1}{\mu_j} \beta_\eta^3 \, | \,  e_{j-k} ) \Bigr|^2 \\
&  \le  \frac{1}{\|  \beta_\eta\|_{L^2(\T)}^2}  \sum_{|j| \le N}    \|  \beta_\eta' + \frac{1}{\mu_j} \beta_\eta^3 \big\|_{L^2(\T)}^2  \\
& \quad \times \Big(  \big\|  \sum_{|k| > N}   (b |\Phi_{2k}^+)  e_{j-k} \big\|_{L^2(\T)}^2 +  \big\|   \sum_{|k| > N}   (b |\Phi_{2k+1}^+)  e_{j-k} \big\|_{L^2(\T)}^2 \Big) \end{align*}
so finally
$$
 		\begin{aligned}
		\|  \Pi_N ^\flat \partial_3 \Pi_N^\sharp b \|_{L^2(\T)}^2& \le   \frac{1}{\|  \beta_\eta\|_{L^2(\T)}^2}  \sum_{|j| \le N} \big \| \beta_\eta' + \frac{1}{\mu_j} \beta_\eta^3\big \|_{L^2(\T)}^2 \,     \sum_{k \not\in [-2N,2N+1]}  | (b | \Phi_k^+) |^2 \\
& \le  \frac{C N^3}{ \delta^2 }  \| \Pi_N^\sharp b \|_{L^2(\T)}^2 \, .
\end{aligned}$$
The argument is identical for higher order derivatives. 		 
\end{proof}
\begin{rmk} 
The decay of the scalar product $(e_k |\partial_3e_j)$ as $|k-j|$ goes to infinity could be specified thanks to stationary phase theorems. For instance, the term $\frac{1}{\|\beta_\eta\|_{L^2}} (\beta_\eta' | e_{k-j})$ in the right hand-side of \eqref{ekej} is proportional to an integral of the form 
$$
	\int_{\T}\Psi''(x_3)\exp \big( i (k-j)  \Psi  (x_3) 
 		\big)  dx_3
	$$
	with~$ \Psi (x_3) :=\displaystyle \int_0^{x_3} \beta_\eta^2(y_3) \, dy_3$. The behaviour of this integral depends on the stationary points of~$\Phi$. For instance, if  $\beta_\eta$  does not vanish, the integral will behave like~$|k-j|^{-n}$ for all~$n$, because $\Phi$ has no stationary point. {\it A contrario}, if $\beta_\eta$ has a (say single) zero of order $m$, then~$\Phi$ has a critical point of order $2m$, and   then, according to~\cite{Stein}, Chapter VIII.1.3,  one has
$$ \left | \int_{\T}\Psi''(x_3)\exp \big( i (k-j)  \Psi  (x_3) 
 		\big)  dx_3\right| \lesssim C |k-j|^{-\frac{2m}{2m+1}}.
$$
\end{rmk}

\medskip
The key proposition to be able to neglect the  high modes  is the following: 
\begin{lem}\label{highmodeslemma}
For any  divergence-free vector field in the sense of~{\rm(\ref{divfreexih})}~$b$ in~$L^2(\T^3)$, the following holds:
$$ \left| \, (C \Pi_\eta^\perp b | \Pi_\eta^\perp b ) - (C \Pi_N^\flat b | \Pi_N^\flat b ) \right| \le \frac{\epsilon(N) }{\delta^2} \| b \|_{L^2(\T)}^2   $$
where $\epsilon(N)$ goes to zero as $N \rightarrow +\infty$. 
\end{lem}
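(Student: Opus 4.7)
Since $\Pi_N^\flat$ and $\Pi_N^\sharp$ are complementary orthogonal projectors on $\Pi_\eta^\perp L^2(\T)^3$ (cf.\ Proposition~\ref{spectraldec}), the natural first step is to expand
\[
(C\Pi_\eta^\perp b \,|\, \Pi_\eta^\perp b) - (C\Pi_N^\flat b \,|\, \Pi_N^\flat b) = (C\Pi_N^\sharp b \,|\, \Pi_N^\sharp b) + (C\Pi_N^\sharp b \,|\, \Pi_N^\flat b) + (C\Pi_N^\flat b \,|\, \Pi_N^\sharp b) .
\]
By Cauchy--Schwarz each of these three terms is bounded in modulus by an operator norm of $C$ pre-- or post--composed with $\Pi_N^\sharp$, times $\|b\|_{L^2(\T)}^2$. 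It therefore suffices to show that
$$
\|\Pi_N^\sharp C\|_{L^2 \to L^2} + \|C\Pi_N^\sharp\|_{L^2 \to L^2} \le \epsilon(N)/\delta^2, \qquad \epsilon(N) \xrightarrow[N\to\infty]{} 0.
$$

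To establish this, the plan is to show that $C$, viewed as an operator on $\Pi_\eta^\perp L^2(\T)^3$, is \emph{compact} with operator norm bounded by $C_B/\delta^2$ where $C_B$ depends only on $B$. Using the decomposition $C = C_m + C_g^{(1)} + C_g^{(2)}$ induced by~\eqref{defiC}, with $C_m$ as in~\eqref{defAmCm} and $C_g^{(1)},C_g^{(2)}$ the two geostrophic contributions, this reduces to two verifications. First, $C_m$ is a combination of integral operators of the form $\beta_\eta \pa_3^{-1}\beta'_{\eta^\perp}$ and $\beta'_{\eta^\perp}\pa_3^{-1}\beta_\eta$; each has a Schwartz kernel bounded on $\T \times \T$ (since $\pa_3^{-1}$ itself has a bounded kernel), so $C_m$ is Hilbert--Schmidt and in particular compact, with HS norm depending only on $\|\beta_\eta\|_{L^\infty}$ and $\|\beta'_{\eta^\perp}\|_{L^\infty}$, uniformly in $\eta$. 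Second, the operators $C_g^{(1)}$ and $C_g^{(2)}$ are of rank at most three: the first is $A_m$ applied to a scalar linear functional of $b_3$ times $\beta_\eta$, the second is a scalar functional of $b$ times the fixed profile $B'$. Both are hence compact, and both inherit the factor $\|\beta_\eta\|_{L^2(\T)}^{-2} \le 1/\delta^2$ from the explicit prefactors appearing in~\eqref{defiC}. Altogether $C$ is compact on $\Pi_\eta^\perp L^2(\T)^3$ with operator norm $\le C_B/\delta^2$.

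The conclusion then follows from the general fact that $\|T P_N^\perp\|_{\mathrm{op}} \to 0$ and $\|P_N^\perp T\|_{\mathrm{op}} \to 0$ for any compact operator $T$ and any increasing sequence of finite-rank orthogonal projections $P_N$ converging strongly to the identity---applied here with $P_N = \Pi_N^\flat$, $P_N^\perp = \Pi_N^\sharp$ on $\Pi_\eta^\perp L^2(\T)^3$, whose strong convergence to the identity is immediate from the orthonormal basis $(\Phi_k^\pm)$ of Proposition~\ref{spectraldec}. Absorbing the uniform $1/\delta^2$ factor into the definition of $\epsilon(N)$ yields the claim. The most delicate point, which also explains the normalisation $\epsilon(N)/\delta^2$ in the statement, is ensuring uniformity in $\eta \in \mathbb{S}^1$, in particular preventing any degeneracy at zeros of $\beta_\eta$: this is built into the integral-kernel viewpoint for $C_m$ (which never requires inverting $\beta_\eta$) and into the elementary bound $\|\beta_\eta\|_{L^2(\T)}^{-2}\le \delta^{-2}$ from~\eqref{nonconstant2}. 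If a quantitative rate for $\epsilon(N)$ is desired, one can instead use the explicit form of the eigenfunctions in Proposition~\ref{spectraldec} to derive $\|\pa_3^{-1}(\beta_\eta e_k)\|_{L^2(\T)} \le |\mu_k|/\|\beta_\eta\|_{L^2(\T)} \le C\|\beta_\eta\|_{L^2(\T)}/|k|$, yielding polynomial decay of the Hilbert--Schmidt tails of $C$.
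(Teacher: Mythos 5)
Your argument is correct and reaches the same conclusion as the paper, but by a genuinely different and softer route. The paper proceeds by explicit computation: after the analogous decomposition, it evaluates the matrix elements $(C_m e_j \mid e_k)$ in the eigenbasis $(\Phi_k^+)$ of Proposition~\ref{spectraldec} via integration by parts against $\pa_3^{-1}$, splits the resulting double sums by frequency regime (with separate rank--one treatments of the geostrophic pieces $C_{g,1},C_{g,2}$), and bounds each piece in terms of tails such as $\epsilon^\perp(N)=\bigl(\sum_{|\ell|\ge N/2}|(\beta'_{\eta^\perp}\mid e_\ell)|^2\bigr)^{1/2}$. You instead observe that $C$ is compact on $\Pi_\eta^\perp L^2(\T)^3$ --- Hilbert--Schmidt for $C_m$ since $\pa_3^{-1}$ has a bounded kernel on $\T\times\T$, finite--rank for $C_{g,1},C_{g,2}$ --- and invoke the abstract fact that $\|C\,\Pi_N^\sharp\|$ and $\|\Pi_N^\sharp\, C\|$ tend to zero whenever a compact operator is cut against an increasing family of finite--rank orthogonal projections converging strongly to the identity. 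This is shorter and more conceptual, and correctly identifies where the $1/\delta^2$ factor is needed. What it gives up is quantification: the paper's explicit tails reveal which contributions admit a rate (the $O(N^{-1/2})$ terms) and which a priori do not (the $\epsilon_0(N)=\bigl(\sum_{|k|>N}|(C_m e_0\mid e_k)|^2\bigr)^{1/2}$ piece, related to $\beta_\eta e_0$ not being mean--free), information the soft argument cannot see.

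One caveat worth making explicit. The abstract compactness fact gives $\|C_\eta\,\Pi_{N,\eta}^\sharp\|\to 0$ for each \emph{fixed} $\eta$, whereas the lemma is eventually used (end of the proof of the theorem) with $N$ chosen independently of $\eta\in\mathbb{S}^1$. Your closing sentence asserting that uniformity is ``built into the integral-kernel viewpoint'' does not by itself settle this: a uniform Hilbert--Schmidt bound on $C_m$ controls the full norm, not the rate of decay of its tail against the $\eta$-dependent basis $(e_k)$. To make the convergence uniform one would combine continuity of $\eta\mapsto C_\eta$ and $\eta\mapsto\Pi_{N,\eta}^\flat$ (via the spectral gap of $A_\eta$ between $|\mu_N|$ and $|\mu_{N+1}|$ and a Riesz contour projection) with monotonicity of $\|C_\eta\Pi_{N,\eta}^\sharp\|$ in $N$ and a Dini-type argument, or else replace the soft step by the quantitative tail estimate you sketch at the very end. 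To be fair, the paper's proof is equally silent on uniformity in $\eta$, so this is a shared gap rather than a defect introduced by your approach --- but since your route makes the pointwise/uniform distinction more invisible, it deserves a sentence.
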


\begin{proof}
We decompose
\begin{equation*}  
 (C \Pi_\eta^\perp b | \Pi_\eta^\perp b) = (C \Pi_N^\flat b | \Pi_N^\flat b ) +  (C \Pi_\eta^\perp b |  \Pi_N^\sharp b) + (C \Pi_N^\sharp b |  \Pi_N^\flat b) \, .
\end{equation*}
We must show that the last two terms go to zero as $N \rightarrow +\infty$. They are very similar, so    we focus on 
$\displaystyle (C \Pi_\eta^\perp b |  \Pi_N^\sharp b)$.   We first consider the  magnetostrophic part, recalling the decomposition~(\ref{defiC}). We have 
\begin{equation*}
(C_m \Pi_\eta^\perp b |  \Pi_N^\sharp b)  
\: = \:  \frac{1}{2} \sum_{j \in \Z, |k| > N}  \left( ( b |  \Phi^+_{2j}) (b | \Phi^+_{2k})  +  ( b |  \Phi^+_{2j+1}) (b | \Phi^+_{2k+1})  \right) (C_m e_j | e_k) \, .   
\end{equation*}
We write 
\begin{align*} \|\beta_\eta\|_{L^2}^2(C_m e_j | e_k)  \: & = -\frac{1}{2} \int_\T   \left( \pa_3^{-1} \bigl(\beta'_{\eta^\perp} \beta_\eta \exp\bigl(\frac{1}{\mu_j} \int_0^{x_3} \beta_\eta^2\bigr)\bigr) \right) \, \beta_\eta^2 \exp\bigl(-\frac{1}{\mu_k} \int_0^{x_3} \beta_\eta^2\bigr)  \\
&  +   \frac{1}{2} \int_\T \left( \pa_3^{-1} \bigl( \beta_\eta^2 \exp\bigl(\frac{1}{\mu_j} \int_0^{x_3} \beta^2\bigr)\bigr)  \right) \beta'_{\eta^\perp} \beta_\eta      
 \exp\bigl(-\frac{1}{\mu_k} \int_0^{x_3} \beta_\eta^2\bigr) \\
 & =: I^1_{jk} + I^2_{jk} \, .
 \end{align*}
 An integration by parts yields (noticing that~$k \neq 0$ by construction) 
 \begin{align*}
  I^1_{jk}  & =   \frac{1}{2} \int_\T   \beta'_{\eta^\perp} \beta_\eta \exp\bigl(\frac{1}{\mu_j} \int_0^{x_3} \beta_\eta^2\bigr)  \,   \left( \pa_3^{-1} \bigl(\beta_\eta^2 \exp\bigl(-\frac{1}{\mu_k} \int_0^{x_3}  \beta_\eta^2\bigr) \bigr) \right) \\
 & = - \frac{\mu_k}{2} \int_\T  \beta'_{\eta^\perp} \beta_\eta  \exp\bigl(\frac{1}{\mu_j} \int_0^{x_3} \beta_\eta^2\bigr) \, \left(  \exp\bigl(-\frac{1}{\mu_k} \int_0^{x_3}  \beta_\eta^2\bigr)   -   \int_\T  \exp\bigl(-\frac{1}{\mu_k} \int_0^{x_3} \beta_\eta^2\bigr) \right) \\
 &  =- \frac{\mu_k}{2}  ( \beta'_{\eta^\perp} | e_{k-j})  \: + \:  \frac{\mu_k}{2}  \biggl(\int_\T  \exp\bigl(-\frac{1}{\mu_k} \int_0^{x_3} \beta_\eta^2\bigr) \biggr) (\beta'_{\eta^\perp} | e_{-j})  \, 
 \end{align*}
 Similarly, for $j \in \Z^*$, 
  \begin{equation*} 
  I^2_{jk}   =     \frac{\mu_j}{2}  ( \beta'_{\eta^\perp} | e_{k-j})   \: - \:  \frac{\mu_j}{2}  \biggl(\int_\T  \exp\bigl(\frac{1}{\mu_j} \int_0^{x_3} \beta_\eta^2\bigr) \biggr) (\beta'_{\eta^\perp} | e_{k})  \, 
\end{equation*}
This implies
\begin{equation} \label{ineq_bj_bk}
\begin{aligned}  
& \frac{1}{2}  \|\beta_\eta\|_{L^2}^2\left| \sum_{j \in \Z^*, |k| > N}  \left( ( b |  \Phi^+_{2j}) (b | \Phi^+_{2k})  +  ( b |  \Phi^+_{2j+1}) (b | \Phi^+_{2k+1})  \right)(C_m e_j | e_k) \right| \\
&  \le C \, \biggl(  \sum_{j \in \Z^*, |k| > N} \left| \frac{1}{k} -\frac{1}{j} \right| \,   |(\beta'_{\eta^\perp} | e_{k-j})|  \,  b_j \, b_k  
\:  + \:    \sum_{j \in \Z^*, |k| > N} \frac{b_k}{|k|} \,  | (\beta'_{\eta^\perp} | e_{-j}) | \,  b_j \,    \\
&  \qquad\qquad +   \,  \sum_{j \in \Z^*, |k| > N} \frac{ b_j }{|j|}  \, | (\beta'_{\eta^\perp} | e_{k}) |  \,  b_k  \biggr)
\end{aligned}
\end{equation}
with  $b_\ell \: := \:   \left| ( b |  \Phi^+_{2\ell}) \right| \: + \:  \left| ( b |  \Phi^+_{2\ell+1}) \right|$.

\medskip
To treat the first term at the right-hand side of \eqref{ineq_bj_bk}, we split the sum over $j$, distinguishing between $|j| \le \frac{N}{2}$ and $|j| > \frac{N}{2}$. One has on the one hand
\begin{align*}
 &  \sum_{0 < |j| \le \frac{N}{2}, |k| > N} \left| \frac{1}{k} -\frac{1}{j} \right| \,   | (\beta'_{\eta^\perp} | e_{k-j}) | \,  b_j \, b_k  \\
&  \le     \sum_{0 < |j| \le \frac{N}{2}, |k| > N} \frac{b_j}{|j|} \,   |(\beta'_{\eta^\perp} | e_{k-j})|   \, b_k  \\
 & \le   \:  \biggl( \sum_{|k| > N} b_k^2 \biggr)^{1/2}  \, \biggl(\sum_{0 < |j| \le \frac{N}{2}} \frac{b_j}{|j|}  \biggr) \biggl( \sum_{|k'| \ge \frac{N}{2}}   |(\beta'_{\eta^\perp} | e_{k'})|^2 \biggr)^{1/2}
 \end{align*}
 by Young's inequality, so
 $$
  \sum_{0 < |j| \le \frac{N}{2}, |k| > N} \left| \frac{1}{k} -\frac{1}{j} \right| \,   | (\beta'_{\eta^\perp} | e_{k-j}) | \,  b_j \, b_k \le 
C \, \epsilon^\perp(N) \, \biggl(  \sum_{|k| > N} b_k^2 \biggr)^{1/2}    \biggl( \sum_{0 < |j| \le \frac{N}{2}} b_j^2  \biggr)^{1/2} \, ,  
 $$
  with 
 $$ \epsilon^\perp(N) := \biggl( \sum_{|\ell| \ge \frac{N}{2}}  | (\beta'_{\eta^\perp} | e_{\ell})|^2 \biggr)^{1/2}.$$
On the other hand
\begin{align*}
&  \sum_{|j| > \frac{N}{2}, |k| > N} \left| \frac{1}{k} -\frac{1}{j} \right| \,   |(\beta'_{\eta^\perp} | e_{k-j})|  \,  b_j \, b_k \\
& \le   \sum_{|j| > \frac{N}{2}, |k| > N}  \left( \frac{1}{|k|} + \frac{1}{|j|} \right) \,   (\beta'_{\eta^\perp} | e_{k-j})  \,  b_j \, b_k \\
& \le \: C \| \beta'_{\eta^\perp} \|_{L^2(\T)}  \,   \biggl( \,  \biggl( \sum_{|k| > N} \frac{b_k}{|k|} \biggr) \,  \biggl( \sum_{|j| > \frac{N}{2}} |b_j|^2 \biggr)^{1/2} \: + \:  \biggl( \sum_{|j| > \frac{N}{2}} \frac{b_j}{|j|} \biggr) \,  \biggl( \sum_{|k| > N} |b_k|^2 \biggr)^{1/2}   \biggr)  \\ 
&    \: \le \:  \frac{C'}{N^{1/2}}    \sum_{|\ell| > \frac{N}{2}} |b_\ell|^2 \, . 
\end{align*}
 
\medskip
The second term at the right-hand side of \eqref{ineq_bj_bk} is bounded by
\begin{align*}
&  \sum_{j \in \Z^*, |k| > N} \frac{b_k}{|k|} \,  | (\beta'_{\eta^\perp} | e_{-j}) | \,  b_j \\
& \le \: C  \| \beta'_{\eta^\perp} \|_{L^2(\T)}   \biggl( \sum_{|k| \ge \frac{N}{2}} \frac{b_k}{|k|} \biggr) \,  \biggl( \sum_{j \in \Z^*} |b_j|^2 \biggr)^{1/2}  
  \le \:  \frac{C'}{N^{1/2}}    \sum_{\ell \in \Z^*} |b_\ell|^2 \, .
 \end{align*}
 
 \medskip
 The third term at the right-hand side of  \eqref{ineq_bj_bk} is bounded by : 
 \begin{align*}
& \sum_{j \in \Z^*, |k| > N} \frac{ b_j }{|j|}  \, | (\beta'_{\eta^\perp} | e_{k}) |  \,  b_k \\
& \le \:  \left( \sum_{j \in \Z^*} \frac{b_j}{|j|} \right) \,  \biggl( \sum_{|k| \ge \frac{N}{2}}  | (\beta'_{\eta^\perp} | e_{k}) |^2 \biggr)^{1/2}  \,  \biggl( \sum_{|k| \ge \frac{N}{2}} |b_k|^2 \biggr)^{1/2} \le  C \, \epsilon^\perp(N) \sum_{\ell\in \Z^*} |b_\ell|^2.
\end{align*}
Notice that $\sum_{\ell \in \Z} |b_\ell|^2 \le 2 \| b \|^2_{L^2(\T)}$.   Gathering all bounds, we find 
 $$  \frac{1}{2}  \sum_{j \in \Z^*, |k| > N}  \left( ( b |  \Phi^+_{2j}) (b | \Phi^+_{2k})  +  ( b |  \Phi^+_{2j+1}) (b | \Phi^+_{2k+1})  \right)(C_m e_j | e_k) \: \le \: \frac{\epsilon_*(N)}{\delta^2} \| b \|_{L^2(\T)}  $$
 where $\epsilon_*(N)$ goes to zero as $N \rightarrow +\infty$. We still have to examine the case $j=0$. We write 
 \begin{align*}
&  \frac{1}{2} \left| \sum_{|k| > N}  \left( ( b |  \Phi^+_0) (b | \Phi^+_{2k})  +  ( b |  \Phi^+_{1}) (b | \Phi^+_{2k+1})  \right) (C_m e_0 | e_k) \right| \\
&  \le   C \, \| b \|_{L^2(\T)} \, \biggl( \sum_{|k| > N}  b_k^2 \biggr)^{1/2} \, \biggl( \sum_{|k| > N}  |(C_m e_0 | e_k)|^2 \biggr)^{1/2}  \le  \frac{\epsilon_0(N)}{\delta^2} \| b \|_{L^2(\T)}^2
  \end{align*}
with 
$$\epsilon_0(N) :=  \biggl( \sum_{|k| > N}  |(C_m e_0 | e_k) |^2 \biggr)^{1/2} \rightarrow 0 \quad \text{as $N \rightarrow +\infty$ }   .$$
Note that there is a priori no rate of convergence for this term~$\epsilon_0(N)$, contrary to the other terms which could be quantified: this is related  to the fact that~$\beta_\eta e_0$ is not mean free.
Eventually, 
\begin{equation} \label{bound_Cm}
\left| (C_m \Pi_\eta^\perp b |  \Pi_N^\sharp b) \right| \: \le \:  \frac{\epsilon_0(N)}{\delta^2}\,   \| b \|_{L^2(\T)}  
\end{equation}
for some $\epsilon_0(N)$ going to zero as $N$ goes to infinity. 

\medskip
To conclude the proof of the lemma, we still have to study the contribution of the other two terms involved in the definition of the operator $C$, see \eqref{defiC}, namely 
\begin{equation*}
C_{g,1} \, b \:   :=   \: A_m  \left( \frac{\beta_\eta}{\| \beta_\eta \|_{L^2(\T)}^2} \int_\T  i \,  b_3  \, B'  \, dx_3 \right), \quad    C_{g,2} \, b \: =   - \:  \frac{i}{\| \beta_\eta \|_{L^2(\T)}^2} \int_\T \beta_\eta  \left( A_m \,  b \right)_3 \, dx_3 \, B'. 
\end{equation*}
We first consider 
\begin{equation*}
 ( C_{g,1} \Pi_\eta^\perp b | \Pi^\sharp_N b) =  -\frac{1}{\| \beta_\eta \|_{L^2(\T)}^2} \left( \int_\T  i \, (\Pi_\eta^\perp b)_3  \, \beta'_{\eta^\perp} \, dx_3 \right) \left( A_m  \bigl(\beta_\eta \bigl(\begin{smallmatrix} \eta^\perp \\ 0 \end{smallmatrix} \bigr)\bigr) |  \Pi_N^\sharp b\right).   
 \end{equation*}
 It is clear that 
 $$  \left| \frac{1}{\| \beta_\eta \|_{L^2(\T)}^2}\left( \int_\T  i \, (\Pi_\eta^\perp b)_3  \,  \beta'_{\eta^\perp}  \, dx_3 \right) \right| \: \le \: \frac C{\delta^2} \, \| b \|_{L^2(\T)} \, .   $$
 Then, 
 \begin{equation*}
 \left( A_m  \bigl(\beta_\eta \bigl(\begin{smallmatrix} \eta^\perp \\ 0 \end{smallmatrix} \bigr)\bigr) |  \Pi_N^\sharp b\right)
  = \sum_{k \not\in [-2N,2N+1]}   \left( A_m  \bigl(\beta_\eta \bigl(\begin{smallmatrix} \eta^\perp \\ 0 \end{smallmatrix} \bigr) \bigr) | \Phi^+_k \right) \, (b | \Phi^+_k) 
 \end{equation*}
 so that 
\begin{equation*}
\left|\left( A_m  \bigl(\beta_\eta \bigl(\begin{smallmatrix} \eta^\perp \\ 0 \end{smallmatrix} \bigr)\bigr) |  \Pi_N^\sharp b\right)\right| 
\:  \le \: C \,  \delta(N)  \biggl( \sum_{k \not\in [-2N,2N+1]}  | (b | \Phi^+_k)|^2 \biggr)^{1/2} \le C \,  \delta(N) \| b \|_{L^2(\T)}
\end{equation*}
with 
$$ \delta(N) := \Bigl( \sum_{|k| \ge N} \left| \left( A_m  \bigl(\beta_\eta \bigl(\begin{smallmatrix} \eta^\perp \\ 0 \end{smallmatrix} \bigr) \bigr) |  \Phi^+_k\right) \right|^2  \Bigr)\: \xrightarrow[N \rightarrow +\infty]{} 0 \, .$$
Hence: 
$$ | ( C_{g,1} \Pi_\eta^\perp b | \Pi^\sharp_N b) | \: \le \: \delta_1(N) \| b \|^2_{L^2(\T)} $$
for some $\delta_1(N)$ going to zero as $N$ goes to infinity.

\medskip
The treatment of $(C_{g,2} \Pi_\eta^\perp b | \Pi^\sharp_N b)$ follows the same lines as the previous one, and we omit it for brevity: we get 
$$  |(C_{g,2} \Pi_\eta^\perp b | \Pi^\sharp_N b)| \le  \: \epsilon_2(N) \| b \|^2_{L^2(\T)} $$
 for some $\epsilon_2(N)$ going to zero as $N$ goes to infinity.   Putting these last two estimates together with \eqref{bound_Cm} ends the proof of the lemma. 
\end{proof}
\subsection{Construction of the matrix $Q$}
The aim of this paragraph is to prove the following  result.
\begin{Prop} \label{prop_Q}
There exist two  finite dimensional operators $Q,T : \Pi_N^\flat L^2(\T)^3 \rightarrow \Pi_N^\flat L^2(\T)^3$ such that 
$$ \:  [\Pi_N^\flat \, Q \, \Pi_N^\flat, A] \: = \:  - \Pi_N^\flat \, C \, \Pi_N^\flat +\e T \, ,  $$
%$$ \:  [\Pi_N^\flat \, Q \, \Pi_N^\flat, A] \: = \:  - \Pi_N^\flat \, C \, \Pi_N^\flat + \Pi_\eta^\perp \Pi_{e_0} C_m  \Pi_{e_0} \Pi_\eta^\perp,  $$
 where $C_m$ is defined in \eqref{defAmCm} and where
 for all smooth~$b$ satisfying the Taylor constraint,
  \begin{equation}\label{formulaT}
\big | (Tb | b)_{L^2} \big | \le C \|b\|_{L^2}^2 \, .
 \end{equation}
\end{Prop}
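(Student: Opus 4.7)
The plan is a standard normal-form construction in the finite-dimensional space $\Pi_N^\flat L^2(\T)^3$, exploiting the diagonalization of $A$ from Proposition~\ref{spectraldec}. In the orthonormal basis $\{\Phi_m^+\}_{-2N\le m\le 2N+1}$ of that space, $A$ acts diagonally, with eigenvalues $\lambda_{2k}=\mu_k$ and $\lambda_{2k+1}=-\mu_k$ for $|k|\le N$ (with the convention $\mu_0:=0$, so that $\lambda_0=\lambda_1=0$). Writing $c_{mn}:=(C\,\Phi_n^+\,|\,\Phi_m^+)$ for the matrix of $\Pi_N^\flat C\Pi_N^\flat$ in this basis, I would define the matrix of $\Pi_N^\flat Q\Pi_N^\flat$ entrywise by solving the homological equation off the resonant set:
\begin{equation*}
q_{mn} \::=\: \frac{c_{mn}}{\lambda_m-\lambda_n}\quad\text{when }\lambda_m\neq\lambda_n,\qquad q_{mn}\::=\:0\quad\text{otherwise.}
\end{equation*}
Since $N$ is fixed and the eigenvalues $\{0\}\cup\{\pm\mu_k:\,1\le k\le N\}$ are explicit (pairwise distinct apart from the relation $\mu_k=-\mu_{-k}$), $Q$ is well-defined and bounded on $\Pi_N^\flat L^2(\T)^3$. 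A direct matrix computation gives $([\Pi_N^\flat Q\Pi_N^\flat,A])_{mn}=(\lambda_n-\lambda_m)\,q_{mn}$, so the residue
\begin{equation*}
R \::=\: \Pi_N^\flat C\Pi_N^\flat \:+\: [\Pi_N^\flat Q\Pi_N^\flat,A]
\end{equation*}
has matrix entries supported on the resonant set $\{(m,n):\lambda_m=\lambda_n\}$. Setting $T:=\varepsilon^{-1}R$, the proposition reduces to the bilinear estimate $|(Rb\,|\,b)_{L^2}|\le C\varepsilon\|b\|_{L^2}^2$ for $b$ satisfying the Taylor constraint.

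The resonant set consists of two types of block. The \emph{kernel block} at $\lambda=0$ is the two-dimensional space spanned by $\Phi_0^+,\Phi_1^+$, which is contained in the range of $\Pi_{e_0}$. For each $1\le k\le N$, the \emph{cross blocks} at $\lambda=\pm\mu_k$ are two-dimensional, spanned by $\Phi_{2k}^+$ and $\Phi_{-2k+1}^+$ (respectively their conjugate partners), since $-\mu_{-k}=\mu_k$. For the kernel block, the rescaled form \eqref{formulaPi0b_bis} of Lemma~\ref{taylorpie0} yields $\|\Pi_{e_0}b\|_{L^2}\le C\varepsilon\|b\|_{L^2}$ whenever $b$ satisfies the Taylor constraint; the contribution to $(Rb\,|\,b)$ is thus $O(\varepsilon^2\|b\|^2)$, much better than required.

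The cross-block contribution is the main obstacle. Reading off the matrix entries from \eqref{defiC} and from the computations of $(C_m e_j\,|\,e_k)$ done in the proof of Lemma~\ref{highmodeslemma}, the scalar part $C_m$ only contributes to the diagonal of each cross block (the cross-entries vanish by the orthogonality of the vectors $(\eta^\perp,i)$ and $(\eta^\perp,-i)$ in $\bC^3$), with $(C_m e_j\,|\,e_j)$ real and of magnitude $O(|\mu_j|)=O(1/|j|)$; the off-diagonal entries $c_{2k,\,-2k+1}$ come entirely from the geostrophic corrections $C_{g,1},C_{g,2}$ introduced in the proof of Lemma~\ref{highmodeslemma}. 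A naive bound would give only an $O(1)$ operator norm for $R$ on these blocks, not the $O(\varepsilon)$ we need. I expect the missing $\varepsilon$-gain to come from combining the divergence-free constraint with the Taylor constraint: Proposition~\ref{estimateC2} already shows that any occurrence of $\Pi_\eta b$ in a bilinear form costs a factor $\varepsilon$, and both $C_{g,1}$ and $C_{g,2}$ involve integrals of $b_3$ or $(A_m b)_3$, which are precisely the quantities tied to the divergence-free relation $i\eta\cdot b_h+\varepsilon\pa_3 b_3=0$. A parallel argument — isolating the $\Pi_{e_0}$-component of $b$ via integration by parts inside $(C_m e_j\,|\,e_j)$ and invoking Lemma~\ref{taylorpie0} to absorb an $\varepsilon$ — should handle the on-diagonal $C_m$ contribution. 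Piecing these together yields $|(Rb\,|\,b)|\le C\varepsilon\|b\|^2$, and hence the stated bound $|(Tb\,|\,b)|\le C\|b\|^2$ on Taylor-constrained fields.
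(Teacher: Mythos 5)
Your setup of the homological equation, the identification of the resonant set (the kernel block $\{\Phi_0^+,\Phi_1^+\}$ and the cross blocks $\{\Phi_{2k}^+,\Phi_{-2k+1}^+\}$), and the treatment of the kernel block via Lemma~\ref{taylorpie0} all match the paper's argument. But on the cross blocks your proposal has a genuine gap: you correctly observe that the naive bound on the resonant entries of $C$ is $O(1)$, and then you \emph{expect} an $\varepsilon$-gain from the divergence-free and Taylor constraints — this is neither proved nor, in fact, available. For $k\neq 0$ the vectors $\Phi_{2k}^+$ and $\Phi_{-2k+1}^+$ lie in the range of $\Pi_\eta^\perp$ (so Proposition~\ref{estimateC2} gives nothing) and are orthogonal to $e_0$ (so $\Pi_{e_0}\Phi_{2k}^+=0$ and Lemma~\ref{taylorpie0} gives nothing). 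Likewise, $b_3$ is \emph{not} made small by the divergence-free relation $i\eta\cdot b_{\rm h}+\varepsilon\partial_3 b_3=0$ — only the $\Pi_\eta b_{\rm h}$ component is — so the integrals of $b_3$ in $C_{g,1}, C_{g,2}$ carry no $\varepsilon$. There is also no way to isolate a "$\Pi_{e_0}$-component" inside $(C_m e_j|e_j)$ since $e_j\perp e_0$ for $j\neq 0$.

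What actually happens in the paper is stronger and purely algebraic, with no smallness argument on the cross blocks at all: after inserting the geostrophic correction terms $C_{g,1}, C_{g,2}$ from \eqref{defiC}, the resonant matrix entries of the \emph{full} operator $C$ vanish exactly. Concretely, on the diagonal of a cross block, $(C_m\Phi_{2k}^+|\Phi_{2k}^+)$ reduces (after integration by parts and discarding a purely imaginary $\bar\mu_k$-term under $\Re$) to a product of two one-dimensional integrals, and the two geostrophic contributions produce precisely the negative of that product; similarly the off-diagonal cross entries of $C_m$ vanish by orthogonality of $(\eta^\perp,i)$ and $(\eta^\perp,-i)$, and the two geostrophic contributions cancel each other. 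The only surviving resonant piece is the kernel block, which is exactly $\Pi_\eta^\perp\Pi_{e_0}C_m\Pi_{e_0}\Pi_\eta^\perp$, and that is what the paper defines $\varepsilon T$ to be; the Taylor constraint (via Lemma~\ref{taylorpie0}) then gives $|(Tb|b)|\lesssim\varepsilon\|b\|^2$. So the key structural fact you are missing is that the geostrophic corrections in $C$ cancel the resonant part of $C_m$ on $\Pi_{e_0}^\perp$ exactly, not approximately; without it, your $T$ is $O(1/\varepsilon)$ on the cross blocks and the normal form does not close.
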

\begin{proof}
We are going to construct~$Q$, and  show that
$$
T = \frac 1 \eps \Pi_\eta^\perp \Pi_{e_0} C_m  \Pi_{e_0} \Pi_\eta^\perp \, .
$$ 
We notice indeed that as $\Pi_\eta^\perp$ and $\Pi_{e_0}$ commute, we have 
$$ C_m \Pi_{e_0} \Pi_\eta^\perp b  = C_m \Pi_\eta^\perp \Pi_{e_0} b  \, .$$
Then if~$b$ satisfies  the linearized Taylor constraint, which amounts to identity \eqref{formulaPi0b_bis}, then we get easily  
$$  |( C_m \Pi_{e_0} \Pi_\eta^\perp b |  \Pi_{e_0} \Pi_\eta^\perp b) | \: \le \: C \eps \| b \|_{L^2}^2  \, . $$So now let us prove that
there exists $Q$ satisfying:  for all $|i|, |j| \le N$, 
$$ (AQ \Phi^+_i - Q A \Phi^+_i | \Phi^+_j )  = -  (C \Phi^+_i | \Phi_j^+)  + \left( C_m \Pi_{e_0} \Phi^+_i | \Pi_{e_0} \Phi^+_j\right). $$
Let $(\lambda_k)_{k \in \Z}$  the family of eigenvalues of $A$ associated to $(\Phi^+_k)_{k \in \Z}$.   We recall from Proposition~\ref{spectraldec}  that
$$ \lambda_0  = \lambda_1 = 0 \, , \quad \lambda_{2k} = \mu_k\, , \quad \lambda_{2k+1} = - \mu_k \, , \quad k \in \Z^* \, . $$ 
The last equality reads 
\begin{equation} \label{homology}
 (\lambda_j - \lambda_i) (Q  \Phi^+_i | \Phi^+_j ) = -  (C \Phi^+_i | \Phi_j^+)    + \left( C_m \Pi_{e_0} \Phi^+_i | \Pi_{e_0} \Phi^+_j\right). 
 \end{equation}
A necessary and sufficient condition for the existence of $Q$ is that the right-hand side is zero when $\lambda_i = \lambda_j$. There are three cases to consider:  
\begin{itemize}
\item The first case is of course when $i=j$. We compute 
\begin{align*}
&  \left( C \Phi^+_i | \Phi^+_i \right)  \:  = \:   \left( C_m \Phi^+_i |  \Phi_i^+ \right) \\
 & \: + \left( A_m  \left( \frac{\beta_\eta}{\| \beta_\eta \|_{L^2(\T)}^2} \int_\T  i \,  \Phi^+_{i,3}  \, B'  \, dx_3 \right) \: | \Phi^+_i \right)  \\
& \: -  \left( \frac{i}{\| \beta_\eta \|_{L^2(\T)}^2} \int_\T \beta_\eta  \left( A_m \,  \Phi_i^+ \right)_3 \, dx_3 \, B'  \: |  \Phi^+_i \right)   \, .
\end{align*}
In the case $i=0$ or $i=1$, we have $\Phi^+_i = \Pi_{e_0} \Phi^+_i$. Moreover, we know from \eqref{PiAmPi} that $\Pi_{e_0} A_m \Pi_{e_0} = 0$. It implies that the second term vanishes. Finally, 
$$ \int_\T \beta_\eta  \left( A_m \,  \Phi_i^+ \right)_3 \, dx_3 =  \frac{i}{\sqrt{2}\| \beta_\eta \|_{L^2(\T)}} \int_\T \beta_\eta^2 \pa_3^{-1} \beta_\eta^2 dx_3 = 0 $$
so that 
\begin{equation*}
\mbox{if} \quad i \in \{0,1\} \, , \quad (C \Phi^+_i | \Phi^+_i) \:  = \:  ( C_m \Phi^+_i | \Phi^+_i ) \: = \:  ( C_m \Pi_{e_0} \Phi^+_i | \Pi_{e_0} \Phi^+_i) \, .  
\end{equation*}
This  means that right-hand side of \eqref{homology} is zero. 

In the case $i = 2k$, $k \in \Z^*$, we find 
\begin{align*}
 \left( C_m \Phi^+_{2k} |  \Phi^+_{2k} \right) &  =  - \Re \int_\T    \beta_\eta   \pa_3^{-1} ( \beta'_{\eta^\perp}  \,  e_k )  \overline{e_k}   
  = \Re \int_\T   e_k   \frac{\beta'_{\eta^\perp}}{\beta_\eta} \beta_\eta \pa_3^{-1} ( \beta_\eta \overline{e_k} )  \\
 & =  \Re   \int_\T  e_k \frac{\beta'_{\eta^\perp}}{\beta_\eta}  \Pi_{e_0}^\perp \beta_\eta   \pa_3^{-1}(  \beta_\eta \Pi_{e_0}^\perp \overline{e_k} ) 
  + \Re   \int_\T  e_k \frac{\beta'_{\eta^\perp}}{\beta_\eta}  \Pi_{e_0} \beta_\eta    \pa_3^{-1} ( \beta_\eta \overline{e_k} ) \\
& =  \Re  \: \overline{\mu_k}    \int_\T \frac{\beta'_{\eta^\perp}}{\beta_\eta}  |e_k|^2 \:   + \:  \Re  \frac{1}{\| \beta \|_{L^2(\T)}^2} \int_\T \beta'_{\eta^\perp} e_k dx_3 \, \int_\T \beta^2 \pa_3^{-1} \beta_\eta \overline{e_k} \\
& =   \Re  \frac{1}{\| \beta \|_{L^2(\T)}^2} \int_\T \beta'_{\eta^\perp} e_k dx_3 \, \int_\T \beta^2 \pa_3^{-1} \beta_\eta \overline{e_k}   \, .
\end{align*}
As regards the second term,
\begin{align*}
& \left( A_m  \left( \frac{\beta_\eta}{\| \beta_\eta \|_{L^2(\T)}^2} \int_\T  i \,  \Phi^+_{i,3}  \, B'  \, dx_3 \right) \: | \Phi^+_i \right) \\
& = \frac{1}{\| \beta_\eta \|_{L^2(\T)}^2} \int_\T \frac{\beta'_{\eta^\perp}}{\sqrt{2}} e_k \, dx_3 \, \int_\T  \frac{\beta_\eta}{\sqrt{2}} \pa_3^{-1} \beta_\eta^2 \overline{e_k} \, dx_3 \\
& = \frac{-1}{\| \beta_\eta \|_{L^2(\T)}^2} \int_\T \frac{\beta'_{\eta^\perp}}{\sqrt{2}} e_k \, dx_3 \, \int_\T  \frac{\beta_\eta^2}{\sqrt{2}} \pa_3^{-1} \beta_\eta \overline{e_k} \, dx_3  \, .
\end{align*}
As regards the third term,
\begin{align*}
-  \left( \frac{i}{\| \beta_\eta \|_{L^2(\T)}^2} \int_\T \beta_\eta  \left( A_m \,  \Phi_{2k}^+ \right)_3 \, dx_3 \, B'  \: |  \Phi^+_{2k} \right) \\
=  \frac{-1}{\| \beta_\eta \|_{L^2(\T)}^2}   \int_\T \frac{\beta_\eta^2}{\sqrt{2}} \pa_3^{-1} \beta_\eta e_k \, dx_3 \, \int_\T  \frac{\beta'_{\eta^\perp}}{\sqrt{2}} \overline{e_k} \, dx_3 \, . 
\end{align*}
We find $ (C \Phi^+_{2k} | \Phi^+_{2k})  = 0$ as expected. In the same way, $ (C \Phi_{2k+1}^+ | \Phi_{2k+1}^+)  = 0$. 

\medskip
\item the second case is when  $(i,j) \in \{(0,1), (1,0)\}$, which implies $\lambda_0 = \lambda_1 = 0$. We compute
\begin{align*}
 & \left( C \Phi^+_0 | \Phi^+_1 \right)  \:  = \:   \left( C_m \Phi^+_0 |  \Phi_1^+ \right) \\
 & \: + \left( A_m  \left( \frac{\beta_\eta}{\| \beta_\eta \|_{L^2(\T)}^2} \int_\T  i \,  \Phi^+_{0,3}  \, B'  \, dx_3 \right) \: | \Phi^+_1 \right)  \\
& \: -  \left( \frac{i}{\| \beta_\eta \|_{L^2(\T)}^2} \int_\T \beta_\eta  \left( A_m \,  \Phi_0^+ \right)_3 \, dx_3 \, B'  \: |  \Phi^+_1 \right)   \, .
\end{align*}
We argue exactly as in the first case to conclude that 
\begin{equation*}
(C \Phi^+_0 | \Phi^1_+) \:  = \:  ( C_m \Phi^+_0 | \Phi^+_1 ) \: = \:  ( C_m \Pi_{e_0} \Phi^+_0 | \Pi_{e_0} \Phi^+_1) 
\end{equation*}
which means that  the right-hand side of \eqref{homology} is zero when $i=0, j=1$.  By the same computation, it is also true when $i=1,j=0$. 

\medskip
\item the last case is when  $(i,j) \in \{(2k,-2k+1), (-2k+1,2k)\}$  for some $k \in \Z^*$, which implies  $\lambda_i = \lambda_j = \mu_k$. 
Note that 
$$\Phi^+_{2k} = \frac 1 {\sqrt 2}e_k \left( \begin{smallmatrix} \eta^\perp \\ i \end{smallmatrix} \right), \quad \Phi^+_{-2k+1} = \overline{\Phi_{2k}^+} =  \frac 1 {\sqrt 2}e_{-k} \left( \begin{smallmatrix} \eta^\perp \\ -i \end{smallmatrix} \right), \quad \text{with} \quad  \left( \left( \begin{smallmatrix}\eta^\perp \\ i \end{smallmatrix} \right) | \left( \begin{smallmatrix} \eta^\perp \\ -i \end{smallmatrix} \right) \right)= 0 \, . $$
This last orthogonality property implies easily that 
\begin{equation*}
\left( C_m \Phi_{2k}^+ | \Phi_{-2k+1}^+ \right) = 0 \, .
\end{equation*}
Then,  
 \begin{align*}
& \left( A_m  \left( \frac{\beta_\eta}{\| \beta_\eta \|_{L^2(\T)}^2} \int_\T  i \,  \Phi^+_{2k,3}  \, B'  \, dx_3 \right) \: | \Phi^+_{-2k+1} \right) \\
& \quad\quad = \frac{1}{\| \beta_\eta \|_{L^2(\T)}^2} \int_\T \frac{\beta'_{\eta^\perp}}{\sqrt{2}} e_k \, dx_3 \, \int_\T  \frac{\beta_\eta^2}{\sqrt{2}} \pa_3^{-1} \beta_\eta e_k \, dx_3 
\end{align*}
while 
\begin{align*}
-  \left( \frac{i}{\| \beta_\eta \|_{L^2(\T)}^2} \int_\T \beta_\eta  \left( A_m \,  \Phi_{2k}^+ \right)_3 \, dx_3 \, B'  \: |  \Phi^+_{-2k+1} \right) \\
=  \frac{-1}{\| \beta_\eta \|_{L^2(\T)}^2}   \int_\T \frac{\beta_\eta^2}{\sqrt{2}} \pa_3^{-1} \beta_\eta e_k \, dx_3 \, \int_\T  \frac{\beta'_{\eta^\perp}}{\sqrt{2}} e_k \, dx_3 \, . 
\end{align*}
Finally, $(C \Phi_{2k}^+ | \Phi_{-2k+1}^+)  = 0$.  In the same way, $(C \Phi_{-2k+1}^+ | \Phi_{2k}^+)  = 0$. 
\end{itemize}
This ends the proof of the proposition. 
\end{proof}
\begin{rmk} From the relation \eqref{homology}, one infers that 
\begin{equation} \label{estimQ}
 \left|(Q \Phi^+_i | \Phi^+_j)\right| \: \le \: C N^2, \quad \forall |i|, |j| \le N
 \end{equation}
From this estimate, we deduce that the operator norm  of $Q$  from $\Pi_N^\flat L^2$ to itself is bounded by:  
\begin{equation}\label{estimQ_2}
\| Q \| \: \le \: \left( \sum_{ij}  \left|(Q \Phi^+_i | \Phi^+_j)\right|^2 \right)^{1/2} \le C \, N^3\,. 
\end{equation}
\end{rmk}

\subsection{Conclusion}
We have now all the elements to establish the stability estimate \eqref{theresult}.
Following the discussion of Paragraph \ref{sec_strategy}, we introduce the new unknown 
$$ d \: := \: ({\rm{Id}} + \eps Q_N ) b \, , \quad Q_N \: := \:  \Pi_N^\flat Q \Pi_N^\flat\,,  $$ 
with $Q$ constructed in Proposition \ref{prop_Q}. From the estimate \eqref{estimQ_2}, we deduce that  there exists~$C > 0$ such that 
$$ \eps \| Q_N \| \: \le \: C \eps N^3 \,.$$  
Hence, for $\eps N^3 \ll 1$, the operator $({\rm{Id}} + \eps Q_N)$ is invertible, with  
$$ ({\rm{Id}}  + \eps Q_N)^{-1} \: = \:{\rm{Id}}  - \eps Q_N + O(\eps^2 N^6) \, . $$
It follows that there exists $C > 0$ such that 
\begin{equation} \label{estimatebd} 
(1 - C\eps N^3) \| b\|_{L^2(\T)} \: \le \: \| d \|_{L^2(\T)} \: \le \: (1 + C\eps N^3) \| b\|_{L^2(\T)} \, . 
\end{equation}
As $b$ satisfies equation \eqref{induction3}, we obtain
\begin{equation} \label{induction_d}
\begin{aligned}
\pa_t d & = \frac{A d}{\eps^3} + \frac{([Q_N,A] +  C) d}{\eps^2} + \frac{R d}{\eps^2} - \frac{1}{\eps^2} d + \pa_3^2 d  \\
& \quad + O(\frac{N^3}{\eps}) d  + \eps [Q_N, \pa_3^2] ({\rm{Id}}  + \eps Q_N)^{-1} d \, , 
\end{aligned}
\end{equation}
where $O(\frac{N^3}{\eps})$ stands for an operator whose operator norm is controlled by $C \, \frac{N^3}{\eps}$. 
As regards the commutator $[Q_n, \pa_3^2]$, we invoke Lemma \ref{lowmodeslemma} together with \eqref{estimQ_2} to get 
$$   \| [Q_N, \pa_3^2] \| \: \le \: C N^3 N^{5/2} \: = \:  C \,  N^{11/2}\, . $$
Thanks to Proposition \ref{prop_Q}, we get 
\begin{align*} \pa_t d & = \frac{A d}{\eps^3} + \frac{ (C  +\eps T -  \Pi_N^\flat C \Pi_N^\flat ) d}{\eps^2} + \frac{R d}{\eps^2} - \frac{1}{\eps^2} d + \pa_3^2 d  \\
& \quad + O\bigl(\frac{N^3}{\eps} + \eps N^{11/2}\bigr) d   \, .
\end{align*}
We  then multiply by $d$ and perform an energy estimate: 
\begin{multline}
\frac{1}{2}\frac{d}{dt} \| d \|^2_{L^2(\T)} + \frac{1}{\eps^2}  \| d \|^2_{L^2(\T)}\: + \: \| \pa_3 d \|^2 _{L^2(\T)}\: \le \: \frac{1}{\eps^2} \Re \, ((Cd | d) - (C \Pi_N^\flat d | \Pi_N^\flat d))  \\ 
+  \frac{1}{\eps} \Re \, (T d   |   d)  \: + \:  \frac{1}{\eps^2} \Re \, ( R d   |  d)   + C \, \left( \frac{N^3}{\eps} + \eps N^{11/2}\right) \| d \|_{L^2(\T)}^2\, . 
\end{multline}
Equivalently 
\begin{multline}
\frac{1}{2}\frac{d}{dt} \| d \|^2_{L^2(\T)} + \frac{1}{\eps^2}  \| d \|^2_{L^2(\T)}  \: + \: \| \pa_3 d \|^2 _{L^2(\T)}\: \le \: \frac{1}{\eps^2} \Re \, ((Cd | d) - (C \Pi_N^\flat d | \Pi_N^\flat d))  \\ 
+  \frac{1}{\eps} \Re \, (T b   |   b) \: + \:  \frac{1}{\eps^2} \Re \, ( R b   |  b)   + C \, \left( \frac{N^3}{\eps} + \eps N^{11/2}\right) \| d \|_{L^2(\T)}^2\, . 
\end{multline}
By \eqref{Rbb} 
$$     | (R b | b ) | \le C  \, \eps \| b\|_{L^2(\T)}^2 \le C' \eps  \| d \|_{L^2(\T)}^2$$
and by \eqref{formulaT}
$$     | (T b | b ) | \le C   \| b\|_{L^2(\T)}^2 \le C'   \| d \|_{L^2(\T)}^2 \, .$$
 Thus, the energy estimate on $d$ reduces to 
 $$ \begin{aligned}
 \frac{1}{2}\frac{d}{dt} \| d \|^2_{L^2(\T)} +\frac{1}{\eps^2}  \| d \|^2_{L^2(\T)} \: + \: \| \pa_3 d \|^2 _{L^2(\T)}\: \le \: \frac{1}{\eps^2} \Re \, ((Cd | d) - (C \Pi_N^\flat d | \Pi_N^\flat d))  \\
 + \: C \, \bigl( \frac{N^3}{\eps} + N^{11/2}\bigr) \| d \|_{L^2(\T)}^2\, . 
 \end{aligned}
 $$
Eventually, to estimate the first term at the right-hand side, we combine Proposition  \ref{estimateC2}  and Lemma \ref{highmodeslemma} to get 
$$ \left|  (Cb | b) - (C \Pi_N^\flat b | \Pi_N^\flat b) \right| \: \le \:  \left( C \eps + \delta(N) \right) \| b \|_{L^2(\T)}^2 $$
which gives 
$$ \left|  (Cd | d) - (C \Pi_N^\flat d | \Pi_N^\flat d) \right| \: \le \:  \left( C \eps + \delta(N) + \eps N^3 \right) \| d \|_{L^2(\T)}^2\, . $$  
We end up with 
$$ \frac{1}{2}\frac{d}{dt} \| d \|^2_{L^2(\T)} + \frac{\| d \|^2_{L^2(\T)}}{\eps^2}  \: + \: \| \pa_3 d \|^2 _{L^2(\T)}\: \le  C_0  \left( \frac{\delta(N)}{\eps^2} + \frac{N^3}{\eps} + \eps N^{11/2} \right) \| d \|_{L^2}^2\, .  $$
Taking $N$ large enough so that $C_0 \delta(N) \le \frac{1}{2}$, one can absorb the first term at the right-hand side by the diffusion term at the left-hand side. This $N$ being fixed, for small enough $\eps$, all remaining terms can be absorbed as well.  This leads to the estimate
$$
  \| d(t) \|^2_{L^2(\T)} + \frac 1{\eps^2}\int_0^t {\| d(t') \|^2_{L^2(\T)}}  \: dt' +\int_0^t \| \pa_3 d(t') \|^2 _{L^2(\T)}\: dt'\le  C  \| d (0)\|_{L^2}^2\, .
$$
Using~(\ref{estimatebd}) this implies that
$$
  \| b(t) \|^2_{L^2(\T)} + \frac 1{\eps^2}\int_0^t {\| b(t') \|^2_{L^2(\T)}}  \: dt' +\int_0^t \| \pa_3 d(t') \|^2 _{L^2(\T)}\: dt'\le  C  \| b(0)\|_{L^2}^2\, ,
$$
and finally since  Lemma \ref{lowmodeslemma} together with \eqref{estimQ_2}  imply that
$$   \| [Q_N, \pa_3] \| \: \le \: C N^3 N^{3/2} \: = \:  C \,  N^{9/2}\, , $$
we have
$$
\|\pa_3 b\|_{L^2(\T)} \le  \|\pa_3 d\|_{L^2(\T)} + C\eps N^{9/2}\| d\|_{L^2(\T)} 
$$
so for~$\eps$ small enough,
$$
  \| b(t) \|^2_{L^2(\T)} + \frac 1{\eps^2}\int_0^t {\| b(t') \|^2_{L^2(\T)}}  \: dt' +\int_0^t \| \pa_3 b(t') \|^2 _{L^2(\T)}\: dt'\le  C  \| b(0)\|_{L^2}^2\, .
$$
We notice that there is a global control on~$b$ in~$L^2$, with no exponential loss in~$t$. The exponential appearing on the right-hand side of~(\ref{theresult}) in Theorem~\ref{mainthm} is due to the contrinbution of low horizontal frequencies as explained in Section~\ref{reductionlargehorizontal}. The end of the proof  of the theorem consists in noticing that the velocity is obtained as a second order operator with respect to~$b$ (see Sections~\ref{computationmagnetostropic} and~\ref{computationgeostropic}).
Theorem~\ref{mainthm} is proved.
  
\section*{Acknowledgements}   
  The authors are very grateful to Laure Saint-Raymond for multiple discussions at the early stage of this work. They acknowledge the support of ANR Project Dyficolti ANR-13-BS01-0003-01.

\end{document}